\documentclass[12pt]{amsart}
\usepackage{amsmath}
\usepackage{amssymb}
\usepackage{amscd}
\usepackage{graphicx}
\usepackage{bm}
\textwidth 16cm
\textheight 23.5cm
\topmargin -.3cm
\oddsidemargin 0cm
\evensidemargin 0cm

\newtheorem{theorem}{Theorem}[section]
\newtheorem{lemma}[theorem]{Lemma}
\newtheorem{proposition}[theorem]{Proposition}

\theoremstyle{definition}
\newtheorem{definition}[theorem]{Definition}

\numberwithin{equation}{section}

\newtheorem{remark}[theorem]{Remark}

\title{Mixed Frobenius Structure and Local Quantum Cohomology}
\author{Yukiko Konishi}
\address{ 
Department of Mathematics, 
Kyoto University,
Kyoto 606-8502, Japan 
}
\email{konishi@math.kyoto-u.ac.jp}
\author{Satoshi Minabe}
\address{Department of Mathematics, 
Tokyo Denki University, 120-8551 Tokyo,  Japan}
\email{minabe@mail.dendai.ac.jp}
\subjclass[2000]{Primary 53D45;  Secondary 14N35}
\begin{document}
\maketitle
\begin{abstract}
In a previous paper, the authors introduced the notion of mixed Frobenius structure (MFS) 
as a generalization of the structure of a Frobenius manifold. 
Roughly speaking, the MFS is defined by replacing a metric of the Frobenius manifold
with a filtration on the tangent bundle equipped with metrics on its graded quotients.
The purpose of the current paper is to 
construct a MFS on the cohomology
of a smooth projective variety whose multiplication is the non-equivariant limit 
of the quantum product twisted by  
a concave vector bundle.
We show that such a MFS is naturally obtained as 
the non-equivariant limit of the Frobenius structure
in the equivariant setting. 
\end{abstract}
\section{Introduction}
We continue our study on mixed Frobenius structure and local quantum cohomology initiated in \cite{KonishiMinabe12}.

\subsection{A mixed Frobenius algebra}
Let $K$ be a field.  A finite-dimensional  associative
commutative $K$-algebra $A$ 
equipped with a non-degenerate bilinear form $g$ 
(called a {\it metric}) 
is called a Frobenius algebra if $g$ is 
invariant under the product, i.e.,
$g(xy,z)=g(x,yz)$ holds for any $x,y,z\in A$. 

In \cite{KonishiMinabe12}, the following generalization of the Frobenius algebra was introduced.
Let $A$ be a $K$-algebra as above. By definition, a Frobenius filtration $(I_{\bullet}, g_{\bullet})$ on $A$
consists of an exhaustive increasing filtration $I_{\bullet}$ by ideals
and $A$-invariant metrics $g_{\bullet}$ on its graded quotients (Definition \ref{def:frobenius-filtration}).
We call an algebra with a 
Frobenius filtration
a mixed Frobenius algebra. 
If the filtration is trivial,
this is nothing but the notion of 
Frobenius algebra. 
We show that
any algebra over an algebraically closed field 
admits a Frobenius filtration (Theorem \ref{thm:frobenius}).
This is in contrast to the fact that not all algebras 
admit invariant metrics.  

One of main results of this paper is to show
that
a mixed Frobenius $K$-algebra appears in the limit as $\lambda\to 0$
of a ``Frobenius algebra over $K[\lambda]$"
(\S \ref{section:MFA2}).
The precise statement is as follows.
Let $H^{\lambda}_K$ be a free $K[\lambda]$-module of finite rank
equipped with a symmetric $K[\lambda]$-bilinear form $g^{\lambda}:H^{\lambda}_K\times H^{\lambda}_K\to K[\lambda,\lambda^{-1}]$.
If $g^{\lambda}$ is unimodular over $K[\lambda,\lambda^{-1}]$,
then it defines 
on the $K$-vector space $H_K:=H^{\lambda}_K/\lambda H^{\lambda}_K$
an exhaustive increasing filtration by  subspaces 
and metrics on its graded quotients (Lemma \ref{induced-nondegenerate-filtration}). 
We call such a pair a {\it nondegenerate filtration}. 
Moreover if $H^{\lambda}_K$ is equipped with a $K[\lambda]$-algebra structure
with respect to which $g^{\lambda}$ is invariant,
then 
the nondegenerate filtration is a Frobenius filtration on $H_K$
with respect to the induced multiplication
(Theorem \ref{thm-MFA}).
This construction is a generalization of the nilpotent construction
\cite[\S 3.1]{KonishiMinabe12} (cf. \S \ref{nilpotent-construction}).

\subsection{A mixed Frobenius structure}
A Saito structure (without a metric)\footnote{
In this article, we call a Saito structure without a metric
{\it  a Saito structure} for short.} 
on a complex manifold $M$
\cite[\S VII.1]{Sabbah}
is a triple consisting of 
a torsion-free flat affine connection $\nabla$, 
a symmetric Higgs field $\Phi:T_M\to \mathrm{End}\, T_{M}$ 
and a vector field $E$ called an Euler vector field
satisfying certain compatibility conditions (see Definition \ref{def-Saito-structure}).
A symmetric Higgs field gives rise to a fiberwise commutative associative multiplication 
$\circ$ on $T_{M}$. 
If a Saito structure $(\nabla, \Phi, E)$ on $M$ is further equipped 
with a $\circ$-invariant metric $g$ on $T_M$ compatible with the other data,
then the Saito structure $(\nabla, \Phi, E)$ with the metric $g$ is equivalent to 
a Frobenius manifold structure on $M$ \cite{Dubrovin}.

Now we introduce the notion of the mixed Frobenius structure
which generalizes the Frobenius manifold structure. 
The idea is to replace a $\circ$-invariant metric $g$ with a Frobenius filtration $(I_{\bullet}, g_{\bullet})$.
Namely, we define a mixed Frobenius structure (MFS) on a manifold $M$ 
to be a Saito structure $(\nabla, \Phi, E)$ on $M$
together with a nondegenerate filtration $(I_{\bullet}, g_{\bullet})$
on the tangent bundle $T_M$
subjecting to various compatibility conditions (Definition \ref{def-MFS}).
In particular, it is required that $(T_M, \circ, I_{\bullet}, g_{\bullet})$
is a mixed Frobenius algebra.
We arrived at this notion through our study on 
local mirror symmetry \cite{CKYZ}. 
For detail about the motivation, we refer to \cite[\S 1.3]{KonishiMinabe12}.
Notice that we slightly modify the definition of the MFS
from \cite{KonishiMinabe12} (cf.\,Remark \ref{rem:definitions}).

For  the  application to the local quantum cohomology,
it is necessary to consider 
a formal and logarithmic version of MFS.
Let $K$ be a subfield of $\mathbb{C}$.
Let $R=K[[t_1,\ldots,t_n,q_1,\ldots,q_m]]$
and $M={\rm Spf}\, R$ 
be the formal completion of $K^{n+m}=\mathrm{Spec}\,K[t,q]$
at the origin. We consider the logarithmic structure on $M$
defined by the divisor $\{ q_1\cdots q_m=0\}$ on $K^{n+m}$.
We denote by $M^{\dagger}$ the resulting logarithmic formal scheme. 
We define a formal (and logarithmic) MFS
on $M^{\dagger}$ in \S \ref{sec:formal-MFS}.
As in the case of mixed Frobenius algebra, we show that 
a formal MFS on $M^{\dagger}$ is obtained in 
the limit as $\lambda\to 0$ of a ``formal Frobenius structure
over $K[\lambda]$'' on $M^{\dagger}$ (Proposition \ref{construction2}).

\subsection{MFS from local quantum cohomology}
Let $X$ be a smooth complex projective variety and
let $H_{\mathbb{C}}:=H^{\rm even} (X, \mathbb{C})$.
We choose a nef basis $\{\phi_{1},\ldots,\phi_p\}$
of $H^2(X,\mathbb{Z})$  
and extend it to a homogenious basis 
$\{\phi_0=1,\phi_1,\ldots,\phi_p,\phi_{p+1},\ldots,\phi_s\}$
of $H_{\mathbb{C}}$. Let $t_0,\ldots,t_s$ be the coordinates on 
$H_{\mathbb{C}}$ associated to the basis.
We set $R=K[[t,q]]$ where $t=(t_0,t_{p+1},\ldots,t_s)$
and $q=(q_1,\ldots,q_p)$ with $q_i=e^{t_i}$.
Let $M^{\dagger}$ be the logarithmic formal scheme 
defined as in the previous paragraph. 

Fix a concave holomorphic vector bundle $\mathcal{V}$ on $X$ 
(e.g., $\mathcal{V}$ is the dual of an ample line bundle).   
We construct a formal MFS on $M^{\dagger}$ from $\mathcal{V}$
as follows. 
Let us introduce the fiberwise $S^1$-action on $\mathcal{V}$
by scalar multiplication. Then, 
following Givental \cite{Givental}, 
we consider
the $S^1$-equivariant Gromov--Witten invariants
of $X$ and the intersection pairing on $X$,
both twisted by
the inverse of the $S^1$-equivariant Euler class of $\mathcal{V}$.
Using them, one can define the twisted 
quantum cup product
$*_\mathcal{V}$ on $H^{\lambda}_{\mathbb{C}}:=H_{\mathbb C}\otimes_{\mathbb C} \mathbb{C}[\lambda]$
where $\mathbb{C}[\lambda]=H_{S^1}^*(pt, \mathbb{C})$ 
is identified with 
the $S^1$-equivariant cohomology of a point. 
Identifying the logarithmic tangent sheaf 
$\mathcal{T}^{\lambda}_{M^{\dagger}}:=\mathcal{T}_{M^{\dagger}}\otimes_{\mathbb{C}} \mathbb{C}[\lambda]$
of $M^{\dagger}$ over $\mathbb{C}[\lambda]$ with  
$\mathcal{O}_M \otimes_{\mathbb{C}} H^{\lambda}_{\mathbb{C}}$,  
we obtain a formal Frobenius structure  over $\mathbb{C}[\lambda]$ on $M^{\dagger}$.
Then, as an application of the results in \S \ref{sec:formal-MFS}, 
we obtain a formal MFS on $M^{\dagger}$ 
in the non-equivariant limit  (i.e. the limit as $\lambda\to 0$)
(Theorem \ref{thm:main}).

As we mentioned earlier, our motivation to study MFS 
comes from local mirror symmetry \cite{CKYZ}.
Relationships to {\it loc.cit.} and to our previous work \cite{KonishiMinabe10} 
are explained in \S\ref{sec:LMS}. 

\subsection{Conventions}
(1) Let $K$ be a field. A $K$-algebra means a finite-dimensional commutative associative $K$-algebra 
with a unit. 
\\
(2) Given a commutative ring $R$,
an $R$-algebra structure on a free $R$-module
means an associative commutative $R$-bilinear multiplication 
which admits a unit.

\subsection*{Acknowledgements}
The authors thank Hiroshi Iritani for discussion and 
various comments. Y.K. is supported in part by 
JSPS Grant-in-Aid for Young Scientists (No.\,22740013). 
S.M. is supported in part by 
JSPS Grant-in-Aid for Young Scientists (No.\,24740028). 

\section{Mixed Frobenius algebra}
\label{section:MFA}

\subsection{Frobenius filtration and mixed Frobenius algebra}
Let $K$ be a field.
A nondegenerate symmetric bilinear form $g$ on a $K$-vector space is called a {\it metric}.
A pair $(I_{\bullet},g_{\bullet})$ consisting 
of an exhaustive increasing filtration $I_{\bullet}$ on
a $K$-vector space by subspaces and
a collection of metrics $g_{\bullet}$ on $I_{\bullet}/I_{\bullet-1}$
is called a {\it nondegenerate filtration} on the vector space.

Let $A$ be a $K$-algebra.
We say that a metric $g$ on an $A$-module $I$ is $A$-invariant
if it satisfies the condition
$$
g(a \cdot x,y)=g(x,a\cdot y)\quad (a\in A,~x,y\in I)~.
$$ 

\begin{definition}\label{def:frobenius-filtration}
A Frobenius filtration on a $K$-algebra $A$ is a nondegenerate
filtration $(I_{\bullet},g_{\bullet})$ on $A$ such that 
each filter $I_{\bullet}$ is an ideal of $A$
and the metric $g_{\bullet}$ on $I_{\bullet}/I_{\bullet-1}$ is 
$A/I_{\bullet-1}$-invariant.
\end{definition}

\begin{definition}
A mixed Frobenius $K$-algebra is a pair which consists 
of a $K$-algebra $A$
and a Frobenius filtration $(I_{\bullet},g_{\bullet})$ on 
$A$.
\end{definition}

\subsection{Existence of Frobenius filtrations}
\label{MFA-existence}
In this subsection, the field $K$ is assumed to be algebraically closed.
\begin{theorem}\label{thm:frobenius}
Any finite-dimensional $K$-algebra $A$ has a Frobenius filtration.
\end{theorem}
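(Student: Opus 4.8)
The plan is to induct on $\dim_K A$. For the inductive step, the key is to produce a single ideal $I_1 \subseteq A$ carrying an $A$-invariant metric $g_1$, and such that the quotient $A/I_1$ has strictly smaller dimension; then a Frobenius filtration on $A/I_1$ (which exists by induction) pulls back, together with $(I_1,g_1)$ placed at the bottom, to a Frobenius filtration on $A$. So the whole problem reduces to the following claim: every nonzero finite-dimensional $K$-algebra $A$ contains a nonzero ideal $I$ admitting an $A$-invariant metric. Granting the claim, one checks routinely that the filtration $0 \subset I_1 \subset (\text{preimages of the filters of }A/I_1)$ is by ideals, is exhaustive, and that on $I_1/0 = I_1$ the metric is $A = A/0$-invariant while on the higher graded pieces the metrics are $(A/I_1)/(\cdots)$-invariant, hence $A/(\cdots)$-invariant, as required by Definition \ref{def:frobenius-filtration}.

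To prove the claim, I would use the structure of finite-dimensional algebras over an algebraically closed field. Write $A$ as a product of local algebras (or, if one prefers to keep $A$ connected, work with one local factor at a time), so reduce to $A$ local with maximal ideal $\mathfrak{m}$ and residue field $K$. The natural candidate for $I$ is the \emph{socle} of $A$, i.e.\ $\mathrm{Ann}_A(\mathfrak m) = \{a \in A : \mathfrak m a = 0\}$, or more precisely the minimal nonzero ideal inside it. Any minimal ideal $I$ of a local $K$-algebra with residue field $K$ is one-dimensional, say $I = K\cdot c$, and is killed by $\mathfrak m$. Define $g_1$ on $I$ by $g_1(c,c) = 1$ (any nonzero value works); this is a metric since $\dim I = 1$. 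For $A$-invariance: for $a \in A$ write $a = \alpha\cdot 1 + m$ with $\alpha \in K$, $m \in \mathfrak m$; then $a\cdot c = \alpha c$ because $m c \in \mathfrak m I = 0$, so $g_1(a\cdot x, y) = \alpha\, g_1(x,y) = g_1(x, a\cdot y)$ for all $x,y \in I$. Thus $g_1$ is $A$-invariant, proving the claim.

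The main obstacle is purely bookkeeping rather than conceptual: one must be careful that at each stage of the induction the metric one attaches to the graded quotient $I_k/I_{k-1}$ is invariant for the correct quotient algebra $A/I_{k-1}$, and that pulling back a Frobenius filtration along $A \twoheadrightarrow A/I_1$ preserves the ideal property and the invariance. This is where the hypothesis that $K$ is algebraically closed is genuinely used — it guarantees that minimal ideals of local factors are one-dimensional, which is what makes the bottom metric exist. (Over a non-closed field a minimal ideal could be a field extension with no invariant metric, e.g.\ in degree-dependent situations.) I would also remark that this argument in fact shows every finite-dimensional $A$ over an algebraically closed field admits a Frobenius filtration whose graded quotients are all one-dimensional, though for the statement of Theorem \ref{thm:frobenius} the weaker conclusion suffices.
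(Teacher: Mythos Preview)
Your proof is correct but takes a different route from the paper. The paper uses a single, canonical filtration: it sets $I_\bullet = \mathfrak{N}^{l-\bullet}$ where $\mathfrak{N}$ is the nilradical, shows each $\mathfrak{N}^i/\mathfrak{N}^{i+1}$ is a completely reducible $A/\mathfrak{N}$-module (hence a direct sum of one-dimensional simples, since $K$ is algebraically closed), and equips each graded piece with the diagonal metric in a chosen simple-module basis. Your argument instead builds the filtration one step at a time by induction on $\dim_K A$, peeling off a minimal (hence one-dimensional) ideal at the bottom and recursing on the quotient. Both proofs hinge on the same fact---that simple modules are one-dimensional over an algebraically closed field---but they package it differently. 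The paper's filtration is canonical and short (length equal to the nilpotency index of $\mathfrak{N}$), with possibly large graded pieces; yours is non-canonical and long (length $\dim_K A$), but has all graded quotients one-dimensional, which, as you note, is a slightly stronger conclusion. Your reduction to the local case is fine but not actually needed: any minimal ideal of $A$ is already a simple $A$-module and hence one-dimensional, so you could skip the product decomposition entirely.
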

Let $\mathfrak{N}=\sqrt{0}$ be the nilradical of $A$. Note that the finite-dimensionality of
$A$ implies that $\mathfrak{N}$ is a nilpotent ideal and 
that $\mathfrak{N}$ coincides with the Jacobson radical of $A$.   
It follows from the latter that $A/\mathfrak{N}$ is a semi-simple algebra. 
Consider the decreasing sequence of ideals
$A\supset \mathfrak{N}\supset \mathfrak{N}^2 \supset \cdots \supset \mathfrak{N}^{l-1} \supset \mathfrak{N}^l=0$.

\begin{lemma}
$\mathfrak{N}^i/\mathfrak{N}^{i+1}$ is a completely reducible $A/\mathfrak{N}^{i+1}$-module.
\end{lemma}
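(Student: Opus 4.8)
The plan is to observe that $\mathfrak{N}$ acts trivially on the quotient $\mathfrak{N}^i/\mathfrak{N}^{i+1}$, so that its module structure descends to the semisimple algebra $A/\mathfrak{N}$, where complete reducibility is automatic.

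First I would record the inclusion $\mathfrak{N}\cdot\mathfrak{N}^i\subseteq\mathfrak{N}^{i+1}$. This says precisely that the ideal $\mathfrak{N}/\mathfrak{N}^{i+1}$ of $A/\mathfrak{N}^{i+1}$ annihilates the $A/\mathfrak{N}^{i+1}$-module $\mathfrak{N}^i/\mathfrak{N}^{i+1}$. Hence the module structure factors through the natural surjection
$A/\mathfrak{N}^{i+1}\twoheadrightarrow (A/\mathfrak{N}^{i+1})\big/(\mathfrak{N}/\mathfrak{N}^{i+1})\cong A/\mathfrak{N}$,
so $\mathfrak{N}^i/\mathfrak{N}^{i+1}$ is naturally a module over $A/\mathfrak{N}$.

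Next, since $A$ is finite-dimensional, hence Artinian, and $\mathfrak{N}$ is its Jacobson radical, the algebra $A/\mathfrak{N}$ is semisimple; by Wedderburn--Artin every module over it is completely reducible. Therefore $\mathfrak{N}^i/\mathfrak{N}^{i+1}$ is a completely reducible $A/\mathfrak{N}$-module. Finally, because the $A/\mathfrak{N}^{i+1}$-action factors through $A/\mathfrak{N}$, the lattice of $A/\mathfrak{N}^{i+1}$-submodules of $\mathfrak{N}^i/\mathfrak{N}^{i+1}$ coincides with the lattice of its $A/\mathfrak{N}$-submodules; complete reducibility over $A/\mathfrak{N}$ thus immediately yields complete reducibility over $A/\mathfrak{N}^{i+1}$, as required.

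There is no serious obstacle in this argument; the only points requiring a little care are the identification $(A/\mathfrak{N}^{i+1})/(\mathfrak{N}/\mathfrak{N}^{i+1})\cong A/\mathfrak{N}$ and the elementary observation that semisimplicity of a module is unaffected by enlarging the acting ring along a surjection that annihilates nothing new.
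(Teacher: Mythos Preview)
Your proof is correct and follows essentially the same approach as the paper: both observe that $\mathfrak{N}/\mathfrak{N}^{i+1}$ annihilates $\mathfrak{N}^i/\mathfrak{N}^{i+1}$, so the $A/\mathfrak{N}^{i+1}$-action factors through the semisimple algebra $A/\mathfrak{N}$, whence complete reducibility. The paper phrases the factoring via the short exact sequence $0\to\mathfrak{N}/\mathfrak{N}^{i+1}\to A/\mathfrak{N}^{i+1}\to A/\mathfrak{N}\to 0$, while you state it directly and add a line on why the submodule lattices agree, but the content is the same.
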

\begin{proof}
Consider the exact sequence of $A$-modules
$$0\longrightarrow \mathfrak{N}/\mathfrak{N}^{i+1} \longrightarrow A/\mathfrak{N}^{i+1} \longrightarrow A/\mathfrak{N}\longrightarrow0~.$$
It follows that $A/\mathfrak{N}^{i+1}$ acts on $\mathfrak{N}^i/\mathfrak{N}^{i+1}$ via $A/\mathfrak{N}$,  since $\mathfrak{N}/\mathfrak{N}^{i+1}$ 
annihilates $\mathfrak{N}^i/\mathfrak{N}^{i+1}$.
Then the semi-simplicity of $A/\mathfrak{N}$ implies that $\mathfrak{N}^i/\mathfrak{N}^{i+1}$ is a completely reducible $A/\mathfrak{N}$-module, 
hence it is also a completely reducible $A/\mathfrak{N}^{i+1}$-module.
\end{proof}

\begin{lemma}
Let $B$ be a finite-dimensional $K$-algebra. Then for any simple $B$-module $S\neq 0$, we have $\dim_{K}S=1$.
\end{lemma}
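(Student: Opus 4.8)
The plan is to exploit the commutativity of $B$ (recall the standing convention that all $K$-algebras in this paper are finite-dimensional and commutative) together with the standing assumption that $K$ is algebraically closed. Since $S\neq 0$ is simple, I would first show that $S$ is cyclic: choosing any nonzero $s\in S$, the submodule $B\cdot s$ is nonzero, hence equals $S$ by simplicity. This yields a surjective $B$-module homomorphism $B\to S$, $b\mapsto b\cdot s$, whose kernel is the annihilator $\mathfrak{m}:=\mathrm{Ann}_B(s)$, an ideal of $B$ with $B/\mathfrak{m}\cong S$ as $B$-modules.

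Next I would observe that $\mathfrak{m}$ is a maximal ideal. Indeed, submodules of $B/\mathfrak{m}$ correspond to ideals of $B$ containing $\mathfrak{m}$, so the simplicity of $S\cong B/\mathfrak{m}$ (no proper nonzero submodules) says exactly that $\mathfrak{m}$ is maximal in the commutative ring $B$. Consequently $B/\mathfrak{m}$ is a field, and since $B$ is finite-dimensional over $K$, so is $B/\mathfrak{m}$; thus $B/\mathfrak{m}$ is a finite field extension of $K$.

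Finally, because $K$ is algebraically closed it admits no nontrivial finite field extension, so the natural inclusion $K\hookrightarrow B/\mathfrak{m}$ is an isomorphism. Therefore $\dim_K S=\dim_K(B/\mathfrak{m})=1$, as claimed.

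I do not expect any genuine obstacle here; the two points that require care are the commutativity convention (which is what guarantees that $\mathfrak{m}$ is a two-sided maximal ideal and hence $B/\mathfrak{m}$ is a field) and the use of algebraic closedness (to rule out nontrivial finite extensions). An essentially equivalent route proceeds via Schur's lemma: commutativity makes multiplication by each $b\in B$ a $B$-module endomorphism of $S$, and Schur's lemma over an algebraically closed field forces $\mathrm{End}_B(S)=K$, so $B$ acts on $S$ by scalars and any nonzero vector already spans $S$. I find the quotient-by-a-maximal-ideal argument cleaner to write, and would present that one.
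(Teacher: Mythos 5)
Your proof is correct and follows essentially the same route as the paper's: realize $S\cong B/\mathfrak{m}$ for a maximal ideal $\mathfrak{m}$ (using commutativity so that the quotient is a field), note that finite-dimensionality of $B$ makes $K\to B/\mathfrak{m}$ a finite field extension, and conclude $B/\mathfrak{m}\cong K$ from algebraic closedness. The only difference is that you spell out the cyclicity/annihilator argument that the paper leaves implicit in the assertion ``there is a maximal ideal $\mathfrak{m}$ of $B$ such that $S\cong B/\mathfrak{m}$.''
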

\begin{proof}
Since $S$ is a simple $B$-module,  there is a maximal ideal $\mathfrak{m}$ of $B$
such that $S\cong B/\mathfrak{m}$ as a $B$-module. 
The finite-dimensionality of $B$ implies that the composition 
$K\to B \to B/\mathfrak{m}$ is a field extension of finite degree.
It then follows that $B/\mathfrak{m}\cong K$, since $K$ is algebraically closed.   
\end{proof}

\begin{proof}[Proof of Theorem \ref{thm:frobenius}]
By the above two Lemmas, $\mathfrak{N}^i/\mathfrak{N}^{i+1}$ is the direct sum of 
$1$-dimensional simple $A/\mathfrak{N}^{i+1}$-modules. 
If we take a basis $x_{i,j}$ ($1\leq j\leq \mathrm{dim}_K\,\mathfrak{N}^i/\mathfrak{N}^{i+1}$)
of the simple modules and define a bilinear form $\langle~,~\rangle_i $
on $\mathfrak{N}^i/\mathfrak{N}^{i+1}$ by
$$
\langle x_{i,j},x_{i,k}\rangle_i=\delta_{j,k}~,
$$
then
$\langle~,~\rangle_i$ is an invariant metric.
Thus the filtration $I_{\bullet}:=\mathfrak{N}^{l-\bullet}$
with metrics $g_{\bullet}:=\langle~,~\rangle_{i-\bullet}$
is a Frobenius filtration on $A$.
\end{proof}

\section{Mixed Frobenius algebra from a localized $K[\lambda]$-metric}\label{section:MFA2}
\subsection{Construction of mixed Frobenius algebra}
\label{lambda-metric}
Let $K$ be a field and let $H_K$ be a $K$-vector space of 
dimension $s$. We set $H^{\lambda}_K:=H_K\otimes_K K[\lambda]$
and identify $H_K$ with the quotient module $H^{\lambda}_K/\lambda H^{\lambda}_K$. 
Let $\pi:H^{\lambda}_K\to 
H_K=H^{\lambda}_K/\lambda H^{\lambda}_K$ be the projection.

\begin{definition}
A  {\it localized $K[\lambda]$-metric} on $H^{\lambda}_K$ 
is a symmetric $K[\lambda]$-bilinear form 
$g^{\lambda}:H^{\lambda}_K \times H^{\lambda}_K \to K[\lambda,\lambda^{-1}]$
which is unimodular over $K[\lambda,\lambda^{-1}]$\footnote{
This means that, given a $K[\lambda]$-basis of $H_K^{\lambda}$, 
the representation matrix of $g^{\lambda}$ 
is a unimodular matrix over $K[\lambda,\lambda^{-1}]$.}.
\end{definition}


Now assume that a localized $K[\lambda]$-metric $g^{\lambda}$ on $H^{\lambda}_K$ is given. We will construct a nondegenerate filtration 
on $H_K$ from $g^{\lambda}$.

\begin{lemma}\label{lemma:elementary-divisors}
There exist
a pair of $K[\lambda]$-module bases
$\bm{x}_1,\ldots, \bm{x}_s$ and 
$\bm{y}_1,\ldots, \bm{y}_s$ of $H^{\lambda}_K$
and a set of integers $\kappa_1\geq \ldots\geq \kappa_s$
satisfying
\begin{equation}\label{elementary-divisors}
g^{\lambda}(\bm{x}_i,\bm{y}_j) =\lambda^{-\kappa_i}\delta_{i,j}~.
\end{equation}
The integers 
$\kappa_i$'s are uniquely determined by   $g^{\lambda}$
(but not the bases).
\end{lemma}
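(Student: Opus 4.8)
The plan is to reduce the statement to the Smith normal form (elementary divisor theorem) over the principal ideal domain $K[\lambda]$. Fix a $K[\lambda]$-basis $e_1,\dots,e_s$ of $H^{\lambda}_K$ and let $G=(g^{\lambda}(e_i,e_j))_{i,j}$ be the Gram matrix, whose entries lie in $K[\lambda,\lambda^{-1}]$. Writing a prospective pair of bases as $\bm{x}_i=\sum_k P_{ki}e_k$ and $\bm{y}_j=\sum_l Q_{lj}e_l$ with $P,Q\in\mathrm{GL}_s(K[\lambda])$, one computes $g^{\lambda}(\bm{x}_i,\bm{y}_j)=(P^{\mathsf T}GQ)_{ij}$, so the assertion is equivalent to finding $A,Q\in\mathrm{GL}_s(K[\lambda])$ (with $A=P^{\mathsf T}$, which ranges over all of $\mathrm{GL}_s(K[\lambda])$) putting $G$ into diagonal form $\mathrm{diag}(\lambda^{-\kappa_1},\dots,\lambda^{-\kappa_s})$ under the two-sided action $G\mapsto AGQ$. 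I stress that $A,Q$ must be invertible over $K[\lambda]$, not merely over $K[\lambda,\lambda^{-1}]$: over the latter ring unimodularity would trivially diagonalize $G$ to the identity, so the entire content lies in respecting the lattice $H^{\lambda}_K$ over $K[\lambda]$.

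First I would clear denominators, choosing $N\gg 0$ so that $G':=\lambda^{N}G$ has entries in $K[\lambda]$, and apply the Smith normal form over the PID $K[\lambda]$ to obtain $A,Q\in\mathrm{GL}_s(K[\lambda])$ with $AG'Q=\mathrm{diag}(d_1,\dots,d_s)$, $d_1\mid\dots\mid d_s$. The key point is that each $d_i$ is a scalar multiple of a power of $\lambda$. Indeed, unimodularity of $g^{\lambda}$ gives $\det G=c\lambda^{m}$ for some $c\in K^{\times}$ and $m\in\mathbb Z$, so $\prod_i d_i=\det A\,\det Q\,\det G'$ is (since the units of $K[\lambda]$ are the nonzero constants) a nonzero scalar multiple of $c\lambda^{sN+m}$, hence a monomial in $\lambda$. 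As $\lambda$ is prime in the UFD $K[\lambda]$ and no other irreducible divides this product, no other irreducible can divide any factor; therefore $d_i=u_i\lambda^{e_i}$ with $u_i\in K^{\times}$ and $0\le e_1\le\dots\le e_s$. Absorbing the $u_i$ into $Q$, and permuting the basis vectors to reverse the order, i.e. setting $\kappa_i:=N-e_{s+1-i}$, yields $AGQ$ diagonal with entries $\lambda^{-\kappa_i}$ and $\kappa_1\ge\dots\ge\kappa_s$, as required.

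For uniqueness I would argue via determinantal (Fitting) ideals, which are insensitive to the permitted change of bases. For $1\le k\le s$ let $\mathfrak a_k\subseteq K[\lambda,\lambda^{-1}]$ be the $K[\lambda]$-submodule generated by all $k\times k$ minors of the Gram matrix of $g^{\lambda}$. By the Cauchy--Binet formula the $k\times k$ minors of $AGQ$ are $K[\lambda]$-linear combinations of those of $G$, and conversely using $A^{-1},Q^{-1}\in\mathrm{GL}_s(K[\lambda])$, so $\mathfrak a_k$ depends only on $g^{\lambda}$. Evaluating $\mathfrak a_k$ on the diagonal normal form, the nonzero $k\times k$ minors are exactly the monomials $\lambda^{-\sum_{i\in I}\kappa_i}$ over $k$-subsets $I$, and since the $\kappa$'s are sorted decreasingly the most negative exponent is $-(\kappa_1+\dots+\kappa_k)$; hence $\mathfrak a_k=\lambda^{-(\kappa_1+\dots+\kappa_k)}K[\lambda]$. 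Thus each partial sum $\kappa_1+\dots+\kappa_k$ is an invariant of $g^{\lambda}$, and therefore so is each $\kappa_k$.

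I expect the only genuine subtlety — and the step that truly uses the unimodularity hypothesis rather than mere nondegeneracy over $K(\lambda)$ — to be the observation that the invariant factors $d_i$ are forced to be pure powers of $\lambda$: without $\det G$ being a monomial the normal form would contain honest polynomials and the clean expression $\lambda^{-\kappa_i}\delta_{i,j}$ would fail. The remaining bookkeeping (the choice of $N$, the permutation arranging $\kappa_1\ge\dots\ge\kappa_s$, and the Cauchy--Binet invariance) is routine.
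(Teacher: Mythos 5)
Your proof is correct and follows essentially the same route as the paper's: clear denominators by a power of $\lambda$, apply the elementary divisor theorem (Smith normal form) over $K[\lambda]$, and use unimodularity over $K[\lambda,\lambda^{-1}]$ --- via the determinant --- to force the invariant factors to be monomials; your Fitting-ideal argument simply makes explicit the standard uniqueness of elementary divisors, which the paper invokes implicitly. One trivial indexing slip: since $d_1 \mid \cdots \mid d_s$ gives $e_1 \le \cdots \le e_s$, the assignment $\kappa_i := N - e_i$ is already decreasing, so no reversal is needed --- your $\kappa_i := N - e_{s+1-i}$ would come out increasing.
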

\begin{proof}
Let $G$
be the matrix representation of $g^{\lambda}$
 with respect to a $K[\lambda]$-module basis of $H^{\lambda}_K$.
 Multiplying by $\lambda^{k_0}$ with some $k_0\in \mathbb{Z}$
if necessary, we assume that
all entries of the matrix $\lambda^{k_0}G$ are polynomials.  
By the theorem of elementary divisors,
$\lambda^{k_0}G$ can be transformed into a diagonal matrix
by successive elementary transformations
from the left and the right. This means that 
there exist $K[\lambda]$-module bases 
$\{\bm{x}_i\}$, $\{\bm{y}_i\}$
of $H^{\lambda}_K$ such that $\lambda^{k_0}
g_{\lambda}(\bm{x}_i,\bm{y}_j)=
\delta_{i,j}\,e_i$
where $e_1,\ldots,e_s\in K[\lambda]$ are  diagonal entries
(i.e. the elementary divisors of $\lambda^{k_0}G$).
The assumption of unimodularity over $K[\lambda, \lambda^{-1}]$
implies that 
$e_i$'s are monomials.
\end{proof}

Let us define the sequence of $K[\lambda]$-submodules:
\begin{equation}\label{def-iklambda}
I_{k}^{\lambda}=\{\bm{x}\in H^{\lambda}_K \mid 
\lambda^k g^{\lambda}(\bm{x},\bm{y})\in K[\lambda]~
(\forall \bm{y}\in H^{\lambda}_K)\}~\quad (k\in \mathbb{Z})~.
\end{equation}
Concretely, $I_k^{\lambda}$ is written as follows with
the basis $\bm{x}_1,\ldots, \bm{x}_s$ of Lemma \ref{lemma:elementary-divisors}.
\begin{equation}\label{ik1}
I_k^{\lambda}=\bigoplus_{i: \kappa_i\leq k} K[\lambda]\,\bm{x}_i\oplus
\bigoplus_{i:\kappa_i>k}\lambda^{\kappa_i-k}K[\lambda]\,\bm{x}_i~.
\end{equation}
The same formula holds for the other basis $\{\bm{y}_i\}$.  

\begin{lemma}\label{g1}
For $\bm{x},\bm{y}\in I_k^{\lambda}$,
$\mathrm{Res}_{\lambda=0}\lambda^{k-1}g^{\lambda}(\bm{x},\bm{y})$
depends only on $\pi(\bm{x}),\pi(\bm{y})\in H_K$.
Moreover
$\mathrm{Res}_{\lambda=0}\lambda^{k-1}g^{\lambda}(\bm{x},\bm{y})=0$
if $\bm{x}\in I_{k-1}^{\lambda}$ or $\bm{y}\in I_{k-1}^{\lambda}$.
\end{lemma}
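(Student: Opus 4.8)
The plan is to use the explicit description \eqref{ik1} of $I_k^\lambda$ in terms of the basis $\{\bm{x}_i\}$ (and the parallel one for $\{\bm{y}_i\}$) together with the diagonalization \eqref{elementary-divisors}. First I would write an arbitrary element $\bm{x}\in I_k^\lambda$ as $\bm{x}=\sum_i a_i\bm{x}_i$ with $a_i\in K[\lambda]$ for $i$ with $\kappa_i\le k$ and $a_i\in\lambda^{\kappa_i-k}K[\lambda]$ for $i$ with $\kappa_i>k$, and similarly $\bm{y}=\sum_j b_j\bm{y}_j$ with the same constraints on $b_j$. Then $\lambda^{k-1}g^\lambda(\bm{x},\bm{y})=\sum_i\lambda^{k-1-\kappa_i}a_ib_i$, and taking $\mathrm{Res}_{\lambda=0}$ picks out the coefficient of $\lambda^{-1}$, i.e. $\mathrm{Res}_{\lambda=0}\lambda^{k-1}g^\lambda(\bm{x},\bm{y})=\sum_i[\text{coeff. of }\lambda^{\kappa_i-k}\text{ in }a_ib_i]$.

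Next I would isolate which terms actually contribute. For $i$ with $\kappa_i<k$, the exponent $\kappa_i-k$ is negative while $a_ib_i\in K[\lambda]$ has only nonnegative powers, so that term vanishes. For $i$ with $\kappa_i>k$, we have $a_i\in\lambda^{\kappa_i-k}K[\lambda]$ and $b_i\in\lambda^{\kappa_i-k}K[\lambda]$, so $a_ib_i\in\lambda^{2(\kappa_i-k)}K[\lambda]$ with $2(\kappa_i-k)>\kappa_i-k$, hence again no contribution. Thus only the indices $i$ with $\kappa_i=k$ survive, and for those the contribution is the coefficient of $\lambda^0$ in $a_ib_i$, namely $a_i(0)b_i(0)$. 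This immediately gives the well-definedness: $a_i(0)$ is the $\bm{x}_i$-coefficient of $\pi(\bm{x})$ in the image basis (for $\kappa_i=k$ the generator of the $i$-th summand maps onto a basis vector of $H_K$ under $\pi$, whereas the summands with $\kappa_i\ne k$ map into the span of the remaining basis vectors or to $0$), so $\mathrm{Res}_{\lambda=0}\lambda^{k-1}g^\lambda(\bm{x},\bm{y})=\sum_{i:\kappa_i=k}a_i(0)b_i(0)$ depends only on $\pi(\bm{x})$ and $\pi(\bm{y})$. One subtlety to handle carefully is that $\pi(\bm{x})$ only sees $a_i\bmod\lambda$, so I should check that changing $\bm{x}$ by an element of $\lambda H^\lambda_K\cap I_k^\lambda$ does not change the residue — but such a change alters each $a_i(0)$ by $0$ (for $\kappa_i\le k$, shifting $a_i$ by $\lambda K[\lambda]$ leaves $a_i(0)$ fixed), so this is automatic.

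Finally, for the vanishing statement when $\bm{x}\in I_{k-1}^\lambda$: then the $\bm{x}_i$-coefficient constraints tighten by one, so for $\kappa_i=k$ we get $a_i\in\lambda K[\lambda]$, hence $a_i(0)=0$ and every surviving term dies; symmetrically for $\bm{y}\in I_{k-1}^\lambda$. I expect no real obstacle here — the lemma is essentially a bookkeeping exercise once \eqref{ik1} is in hand. The one place to be attentive is making the identification between $\pi(\bm{x})$ and the tuple $(a_i(0))_{\kappa_i=k}$ precise, i.e. pinning down exactly which images $\pi(\bm{x}_i)$ form a basis of which graded piece of $H_K$; this is really the content that makes the later construction of the nondegenerate filtration work, so it is worth stating cleanly even though it is not hard.
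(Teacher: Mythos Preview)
Your proposal is correct and follows essentially the same approach as the paper: both expand $\bm{x}$ in the basis $\{\bm{x}_i\}$ and $\bm{y}$ in the basis $\{\bm{y}_i\}$ via \eqref{ik1}, use the diagonalization \eqref{elementary-divisors} to reduce the residue to $\sum_{i:\kappa_i=k}a_i(0)b_i(0)$, and read off both claims from this formula. The paper simply records this identity (its eq.~\eqref{p1}) and says the statement follows, whereas you spell out the case analysis and the dependence on $\pi(\bm{x}),\pi(\bm{y})$ explicitly; the content is the same.
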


\begin{proof}
Let us write
$\bm{x}$, $\bm{y}\in I_k^{\lambda}$ as 
\begin{equation}\nonumber
\begin{split}
\bm{x}&=\sum_{i:\kappa_i\leq k} f_i(\lambda)\bm{x}_i+
\sum_{i:\kappa_i>k}\lambda^{\kappa_i-k}f_i(\lambda)\bm{x}_i 
\quad (f_i(\lambda)\in K[\lambda])~,
\\
\bm{y}&=\sum_{i:\kappa_i\leq k} h_i(\lambda)\bm{y}_i+
\sum_{i:\kappa_i>k}\lambda^{\kappa_i-k}h_i(\lambda)\bm{y}_i 
\quad (h_i(\lambda)\in K[\lambda]).
\end{split}
\end{equation}
By eq.\eqref{elementary-divisors}, we obtain
\begin{equation}\label{p1}
\mathrm{Res}_{\lambda=0}\lambda^{k-1}g^{\lambda}(\bm{x},\bm{y})=
\sum_{i:\kappa_i=k}f_i(0)h_i(0)~.
\end{equation}
The statement  follows easily from this.
\end{proof}

Let
\begin{equation}\label{ik2}
I_k:=\pi(I_k^{\lambda})=\bigoplus_{i: \kappa_i\leq k} K \pi(\bm{x}_i)
\quad (k\in \mathbb{Z})~.
\end{equation}
By Lemma \ref{g1},  the following bilinear form
$g_k$ on $I_k/I_{k-1}$ is well-defined:
\begin{equation}\label{gk}
g_k(\bar{x},\bar{y})=
\mathrm{Res}_{\lambda=0}\lambda^{k-1}g^{\lambda}(\bm{x},\bm{y})\quad
(x,y\in I_k)~,
\end{equation}
where $x\mapsto \bar{x}$ denotes the projection $H_K\to H_K/I_{k-1}$ and
$\bm{x},\bm{y}$ are any lifts of $x,y$ to 
$I_k^{\lambda}$. 
\begin{lemma}\label{induced-nondegenerate-filtration}
$(I_{\bullet},g_{\bullet})$ is a nondegenerate 
filtration on $H_K$.
\end{lemma}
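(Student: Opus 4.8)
The plan is to check directly the two items in the definition of a nondegenerate filtration: (i) that $I_\bullet$ is an exhaustive increasing filtration of $H_K$ by $K$-subspaces, and (ii) that each $g_k$ is a metric on the graded piece $I_k/I_{k-1}$, i.e.\ a nondegenerate symmetric $K$-bilinear form.

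For (i), I would first note that the submodules $I_k^\lambda$ are defined intrinsically by \eqref{def-iklambda}, with no reference to a basis, so $I_k:=\pi(I_k^\lambda)$ is a well-defined subspace of $H_K$. The explicit formula \eqref{ik2}, namely $I_k=\bigoplus_{i:\kappa_i\le k}K\,\pi(\bm{x}_i)$, then makes everything transparent: $\kappa_i\le k$ implies $\kappa_i\le k+1$, so $I_k\subseteq I_{k+1}$; and for $k\ge\kappa_1$ every index satisfies $\kappa_i\le k$, so $I_k=H_K$ and the filtration is exhaustive (it is moreover separated, since $I_k=0$ for $k<\kappa_s$, though this is not needed).

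For (ii), symmetry of $g_k$ follows from the symmetry of $g^\lambda$ and the $K$-linearity of $\mathrm{Res}_{\lambda=0}$ (which also shows that $g_k$ is $K$-valued), while well-definedness of $g_k$ on $I_k/I_{k-1}$ is precisely the content of Lemma \ref{g1}. For nondegeneracy I would use both bases $\{\bm{x}_i\}$ and $\{\bm{y}_i\}$ from Lemma \ref{lemma:elementary-divisors}: by \eqref{ik2} the families $\{\overline{\pi(\bm{x}_i)}:\kappa_i=k\}$ and $\{\overline{\pi(\bm{y}_j)}:\kappa_j=k\}$ are each a basis of $I_k/I_{k-1}$, and, taking $\bm{x}_i$ (resp.\ $\bm{y}_j$) itself as the lift of $\pi(\bm{x}_i)$ (resp.\ $\pi(\bm{y}_j)$) to $I_k^\lambda$ --- a legitimate choice because $\kappa_i,\kappa_j\le k$ --- formula \eqref{p1} yields $g_k(\overline{\pi(\bm{x}_i)},\overline{\pi(\bm{y}_j)})=\delta_{i,j}$. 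Thus the Gram matrix of $g_k$ relative to these two bases is the identity, so $g_k$ is a perfect pairing; being also symmetric, it is a metric.

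I do not anticipate a genuine obstacle: the statement is essentially a bookkeeping consequence of the elementary-divisor normal form in Lemma \ref{lemma:elementary-divisors} together with the residue computation \eqref{p1}. The only points that require a little care are checking that $\{\pi(\bm{x}_i):\kappa_i=k\}$ is linearly independent modulo $I_{k-1}$, so that it is genuinely a basis of $I_k/I_{k-1}$ (immediate from \eqref{ik2}), and making sure that the value of $g_k$ is evaluated with admissible lifts --- this being exactly the reason one argues with the adapted bases $\{\bm{x}_i\},\{\bm{y}_i\}$ rather than with arbitrary elements of $I_k$.
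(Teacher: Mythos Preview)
Your proposal is correct and follows essentially the same approach as the paper: the paper's proof is the single sentence ``Nondegeneracy of $g_k$ follows from eq.\eqref{p1},'' and your argument is just a more detailed unpacking of that observation together with the (immediate) verification that $I_\bullet$ is an exhaustive increasing filtration from \eqref{ik2}.
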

\begin{proof} Nondegeneracy of $g_k$ follows from eq.\eqref{p1}.
\end{proof}

Now assume that $H^{\lambda}_K$ is equipped with
an associative commutative $K[\lambda]$-algebra structure $\ast$
with unit.
Let $g^{\lambda}$ be a localized $K[\lambda]$-metric
which is $\ast$-invariant, i.e., $g^{\lambda}$ satisfies
\begin{equation}\label{glambda-invariance}
g^{\lambda}(\bm{x}\ast \bm{y},\bm{ z})=
g^{\lambda}(\bm{x},\bm{y}\ast  \bm{z})\quad 
(\bm{x},\bm{y},\bm{z}\in H^{\lambda})~.
\end{equation}
On $H_K$,
we have the induced multiplication and
the nondegenerate filtration $(I_{\bullet},g_{\bullet})$ 
defined in eqs.\eqref{ik2}, \eqref{gk}. 
\begin{theorem}\label{thm-MFA}
$(H_K, I_{\bullet},g_{\bullet})$ is a mixed Frobenius algebra.
\end{theorem}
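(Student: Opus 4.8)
The plan is to reduce the statement to Lemma \ref{induced-nondegenerate-filtration}, which already produces a nondegenerate filtration $(I_{\bullet},g_{\bullet})$ on $H_K$; what remains is to verify the two requirements of Definition \ref{def:frobenius-filtration}, namely that each $I_k$ is an ideal of $H_K$ for the induced multiplication, and that $g_k$ on $I_k/I_{k-1}$ is $H_K/I_{k-1}$-invariant. The single fact that makes both of these work is that the $K[\lambda]$-submodules $I_k^{\lambda}$ of eq.\eqref{def-iklambda} are themselves ideals of $(H^{\lambda}_K,\ast)$, so the first step is to establish that.

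To see $I_k^{\lambda}$ is an ideal, take $\bm{a}\in H^{\lambda}_K$ and $\bm{x}\in I_k^{\lambda}$. Using commutativity of $\ast$ and the $\ast$-invariance \eqref{glambda-invariance} of $g^{\lambda}$, one has $\lambda^k g^{\lambda}(\bm{a}\ast\bm{x},\bm{y})=\lambda^k g^{\lambda}(\bm{x},\bm{a}\ast\bm{y})$ for every $\bm{y}\in H^{\lambda}_K$; since $\bm{a}\ast\bm{y}\in H^{\lambda}_K$ and $\bm{x}\in I_k^{\lambda}$, the right-hand side lies in $K[\lambda]$, whence $\bm{a}\ast\bm{x}\in I_k^{\lambda}$. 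The projection $\pi\colon H^{\lambda}_K\to H_K$ is a surjective homomorphism for the induced multiplication, so $I_k=\pi(I_k^{\lambda})$ is an ideal of $H_K$. In particular $I_{k-1}\cdot I_k\subseteq I_{k-1}$, which is precisely what is needed for $I_k/I_{k-1}$ to carry a module structure over $H_K/I_{k-1}$ and for the invariance condition to make sense.

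Next I would check $H_K/I_{k-1}$-invariance of $g_k$ by a direct residue computation. Let $a\in H_K$ with a lift $\bm{a}\in H^{\lambda}_K$, and let $x,y\in I_k$ with lifts $\bm{x},\bm{y}\in I_k^{\lambda}$. Since $I_k^{\lambda}$ is an ideal, $\bm{a}\ast\bm{x}$ lies in $I_k^{\lambda}$ and satisfies $\pi(\bm{a}\ast\bm{x})=a\cdot x$, so by the definition \eqref{gk} of $g_k$ together with the independence of the choice of lift guaranteed by Lemma \ref{g1}, $g_k(\overline{a\cdot x},\bar{y})=\mathrm{Res}_{\lambda=0}\lambda^{k-1}g^{\lambda}(\bm{a}\ast\bm{x},\bm{y})$. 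Applying commutativity and \eqref{glambda-invariance} once more rewrites the right-hand side as $\mathrm{Res}_{\lambda=0}\lambda^{k-1}g^{\lambda}(\bm{x},\bm{a}\ast\bm{y})=g_k(\bar{x},\overline{a\cdot y})$, which is exactly the identity $g_k(\bar{a}\cdot\bar{x},\bar{y})=g_k(\bar{x},\bar{a}\cdot\bar{y})$ in $I_k/I_{k-1}$. Together with Lemma \ref{induced-nondegenerate-filtration} this verifies all conditions of Definition \ref{def:frobenius-filtration}, proving the theorem.

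I do not expect a genuine obstacle: the whole argument rests on the observation that $\ast$-invariance plus commutativity of $g^{\lambda}$ force $I_k^{\lambda}$ to be an ideal, after which both required properties descend essentially formally. The only points demanding care are bookkeeping ones — checking that $\bm{a}\ast\bm{x}$ is a legitimate lift of $a\cdot x$ that belongs to $I_k^{\lambda}$, so that the well-definedness of Lemma \ref{g1} applies, and confirming that the $H_K/I_{k-1}$-module structure on $I_k/I_{k-1}$ appearing in Definition \ref{def:frobenius-filtration} is indeed the one induced from $\ast$.
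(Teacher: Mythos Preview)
Your proposal is correct and follows exactly the same approach as the paper's proof: the paper also argues that $\ast$-invariance of $g^{\lambda}$ makes each $I_k^{\lambda}$ an ideal, hence $I_k$ is an ideal for the induced product, and then deduces $\circ$-invariance of $g_k$ from $\ast$-invariance of $g^{\lambda}$. The paper states these steps in two lines without details, whereas you have supplied the explicit verifications (the $\lambda^k g^{\lambda}(\bm{a}\ast\bm{x},\bm{y})=\lambda^k g^{\lambda}(\bm{x},\bm{a}\ast\bm{y})$ computation and the residue argument via Lemma~\ref{g1}).
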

\begin{proof}
The $\ast$-invariance \eqref{glambda-invariance} of $g^{\lambda}$ implies that
$I_k^{\lambda}$ is an ideal.
Therefore $I_k$ is an ideal with respect to the induced multiplication
$\circ$ on $H_K$.
The $\circ$-invariance of $g_k$ 
follows from the $\ast$-invariance of $g^{\lambda}$.
\end{proof}

\subsection{Nilpotent construction}\label{nilpotent-construction}
Let $(A,g)$ be a Frobenius $K$-algebra having nilpotent elements 
$n_1, \ldots, n_r$ and let
$$\bm{n}=\lambda^r+n_1\lambda^{r-1}+\cdots + n_r\in A[\lambda].$$
As an example of Theorem \ref{thm-MFA},
we consider the case $H_K^{\lambda}=A[\lambda]$
with the localized $K[\lambda]$-metric $g^{\lambda}$
given by 
$$g^{\lambda}(\bm{x},\bm{y}):=
g\Big(\bm{x}\cdot \bm{y},~{\bm n}^{-1}\Big)
=\sum_{j\geq 0} \frac{1}{\lambda^{(j+1)r}} g(\bm{x}\cdot \bm{y},~{(\lambda^r-\bm n)}^j)
\quad (\bm{x},\bm{y}\in A[\lambda])~.
$$

Let us calculate the nondegenerate filtration $(I_{\bullet},g_{\bullet})$
defined by eqs.\eqref{ik2}, \eqref{gk}.
The ideals $I_k^{\lambda}$ of $A[\lambda]$ defined in eq.\eqref{def-iklambda}
are given as follows.

\begin{lemma}
\label{ex-1}
We have
\begin{equation}\begin{split}
I_{k}^{\lambda}=\begin{cases}
\{ \lambda^{-k} \bm n \cdot \bm{x}\mid \bm{x}\in A[\lambda]\} &(k\leq 0)\\
I_{0}^{\lambda}\oplus J_{k}^{\lambda}&(k>0)
\end{cases}~~.
\end{split}
\end{equation}
In the last line,  the direct sum is that of $A$-modules and
$$
J_k^{\lambda}=\{\bm{x}\in A^{<r}[\lambda]\mid
\lambda^k \bm{x} \text{ is divisible by $\bm{n}$ }\}~,
$$
where $A^{<r}[\lambda]=\{\bm{x}\in A[\lambda]\mid \mathrm{deg} \,\bm{x}<r\}$.
Here $\mathrm{deg} \,\bm{x}$ is the degree of $\bm x$ with respect to $\lambda$.
\end{lemma}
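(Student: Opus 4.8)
The plan is to analyze the submodule $I_k^{\lambda}\subset A[\lambda]$ defined by
\[
I_{k}^{\lambda}=\{\bm{x}\in A[\lambda]\mid \lambda^k g^{\lambda}(\bm{x},\bm{y})\in K[\lambda]\ (\forall \bm{y})\},
\qquad g^{\lambda}(\bm{x},\bm{y})=g(\bm{x}\cdot\bm{y},\bm{n}^{-1}),
\]
by unwinding what it means for $\lambda^k g(\bm{x}\cdot\bm{y},\bm{n}^{-1})$ to be polynomial. Since $g$ is a nondegenerate invariant metric on $A$ and $\bm{y}$ ranges over all of $A[\lambda]$, the vanishing of the principal part of $\lambda^k g(\bm{x}\bm{y},\bm{n}^{-1})$ for every $\bm{y}$ is equivalent (by nondegeneracy of $g$) to the statement that $\lambda^k \bm{x}\cdot\bm{n}^{-1}\in A[\lambda]$, i.e. that $\lambda^k\bm{x}$ is divisible by $\bm{n}$ in $A[\lambda]$. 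So the first step is to reduce the problem to the purely algebraic description
\[
I_k^{\lambda}=\{\bm{x}\in A[\lambda]\mid \bm{n}\mid \lambda^k\bm{x}\text{ in }A[\lambda]\}.
\]
This reduction is where I expect the only subtlety: one must check that ``$g(\bm{x}\bm{y},c)\in K[\lambda]$ for all $\bm{y}$'' forces $c:=\lambda^k\bm{x}\bm{n}^{-1}$ itself to lie in $A[\lambda]$ and not merely in $A[\lambda,\lambda^{-1}]$ with some cancellation; this follows because $g$ stays nondegenerate after base change to $K[\lambda,\lambda^{-1}]$, so the pairing detects each Laurent coefficient of $c$ separately.

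Next I would split according to the sign of $k$. For $k\le 0$: writing $\lambda^{-k}\bm{n}\cdot A[\lambda]$, every element is visibly of the form $\lambda^{|k|}\bm{n}\bm{x}$, hence $\bm{n}$ divides $\lambda^{k}\cdot(\lambda^{-k}\bm{n}\bm{x})=\bm{n}\bm{x}$ trivially, giving one inclusion; conversely if $\bm{n}\mid\lambda^k\bm{x}$ then $\lambda^k\bm{x}=\bm{n}\bm{z}$ for some $\bm{z}\in A[\lambda]$, and comparing $\lambda$-degrees (using that $\bm{n}$ is monic of degree $r$) shows $\bm{z}$ is divisible by $\lambda^{-k}=\lambda^{|k|}$, so $\bm{x}\in\lambda^{-k}\bm{n}A[\lambda]$. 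For $k>0$: note $I_0^\lambda=\bm{n}A[\lambda]$ sits inside $I_k^\lambda$, and I would show any $\bm{x}\in I_k^\lambda$ decomposes uniquely as an element of $\bm{n}A[\lambda]$ plus an element of degree $<r$ — carry out division of $\bm{x}$ by the monic polynomial $\bm{n}$ over the (commutative) ring $A$, writing $\bm{x}=\bm{n}\bm{q}+\bm{s}$ with $\deg\bm{s}<r$, and observe that $\bm{n}\mid\lambda^k\bm{x}$ is then equivalent to $\bm{n}\mid\lambda^k\bm{s}$, i.e. $\bm{s}\in J_k^\lambda$. The directness of the sum $I_0^\lambda\oplus J_k^\lambda$ is immediate from uniqueness of the remainder $\bm{s}$.

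The one point that needs genuine care is polynomial division by $\bm{n}$ over $A$: since $A$ is not a domain, I should note that $\bm{n}=\lambda^r+n_1\lambda^{r-1}+\cdots+n_r$ is monic, so the usual division algorithm still produces a unique quotient and remainder in $A[\lambda]$ with $\deg$(remainder)$<r$, and monicity is also what lets me read off $\lambda$-degrees in the $k\le 0$ case. I would also remark, for later use and for consistency with Lemma~\ref{lemma:elementary-divisors}, that the elementary divisors $\kappa_i$ of $g^\lambda$ are exactly the jumps of this filtration; but that is not needed for the statement of Lemma~\ref{ex-1} itself. The main obstacle, then, is purely the first reduction step — translating the metric condition into divisibility by $\bm{n}$ — after which everything is a degree count plus the division algorithm for a monic polynomial over a commutative ring.
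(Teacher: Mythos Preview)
Your approach is correct and slightly more streamlined than the paper's. You first establish the uniform characterization $I_k^\lambda=\{\bm{x}:\bm{n}\mid\lambda^k\bm{x}\}$ via nondegeneracy of $g$ (using invariance to rewrite $\lambda^k g^\lambda(\bm{x},\bm{y})=g(\bm{y},\lambda^k\bm{x}\,\bm{n}^{-1})$ and then killing the negative Laurent coefficients of $\lambda^k\bm{x}\,\bm{n}^{-1}$), whereas the paper proceeds case by case: for $k=0$ it expands $g^\lambda(\bm{x}'',y)$ as a series in $\lambda^{-1}$ and iteratively forces the coefficients $a_1,a_2,\ldots,a_r$ of the remainder $\bm{x}''$ to vanish, then reduces $k<0$ to $k=0$, and declares $k>0$ ``easy to see''. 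Both arguments ultimately rest on division by the monic polynomial $\bm{n}$ over $A$. Your route has the advantage of treating all $k$ uniformly and making the divisibility criterion explicit; the paper's route avoids having to argue that $\bm{n}^{-1}\in A[\lambda,\lambda^{-1}]$ (which you implicitly need---it holds because the $n_i$ generate a nilpotent ideal, so the geometric series for $\bm{n}^{-1}$ terminates). One small slip in your $k\le 0$ step: from $\lambda^k\bm{x}=\bm{n}\bm{z}$ with $\bm{z}\in A[\lambda]$ you do \emph{not} conclude that $\bm{z}$ is divisible by $\lambda^{-k}$ (e.g.\ $k=-1$, $\bm{x}=\lambda\bm{n}$, $\bm{z}=1$); rather, multiplying by $\lambda^{-k}$ gives $\bm{x}=\lambda^{-k}\bm{n}\bm{z}\in\lambda^{-k}\bm{n}\,A[\lambda]$ immediately, so no degree comparison is needed there.
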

\begin{proof}
Since $\bm n$ is monic of degree $r$,
any $\bm x\in A[\lambda]$ is written uniquely as
${\bm x}={\bm n}\cdot{\bm x}' + {\bm x}''$ with ${\rm deg}\, {\bm x}''<r$.

First, we consider the case $k=0$.
It is easy to see that
$\bm x \in I_0^{\lambda}$ if and only if
$g^{\lambda}({\bm x}'', y)=0$ for any $y\in A$. 
If ${\bm x}''=\sum_{i=1}^{r}a_i\lambda^{r-i}$ then we have,
$$g^{\lambda}({\bm x}'', y)=\sum_{1\leq i\leq r, ~j\geq 0}
g(a_i{(\lambda^r-\bm n)}^j,y)\lambda^{-(i+jr)}
=\frac{g(a_1,y)}{\lambda}+o(\frac{1}{\lambda^2}).
$$
From this equation, 
for $x\in I^{\lambda}_0$, it is necessary
to have $g(a_1, y)=0$ for any $y\in A$, 
hence $a_1=0$. Then we have
$$g^{\lambda}({\bm x}'', y)=\frac{g(a_2,y)}{\lambda^2}+o(\frac{1}{\lambda^3}),$$
hence $a_2=0$. Repeating this process, we obtain ${\bm x}''=0$.

Next, we consider the case $k<0$. If $\bm x \in I_k^{\lambda}$ then $\bm x \in I_0^{\lambda}$.
Therefore it is written as ${\bm x}={\bm n}\cdot{\bm x}'$. 
Since ${\bm x} \in I_k^{\lambda}$, we have
$\lambda^k g^{\lambda}({\bm x}, y)=\lambda^kg({\bm x}', y) \in K[\lambda]$
for any $y\in A$. 
It then follows that the coefficients of ${\bm x}'$ up to degree $-k-1$
must be zero. 
Hence ${\bm x}'$ is divisible by $\lambda^{-k}$. 

For the case $k>0$,
it is easy to see that $\bm x \in I_k^{\lambda}$ if and only if $\lambda^k {\bm x}''$ is divisible 
by $\bm n$. 
\end{proof}

Let $N:A^{\oplus r}\to A^{\oplus r}$ 
be the homomorphism given by
$$
N\begin{pmatrix}a_1\\\vdots \\a_r\end{pmatrix}=
\begin{pmatrix}
-n_1&1&0&\cdots&0\\
-n_2&0&1&\cdots&0\\
\vdots&\vdots&\vdots&\ddots&\vdots\\
-n_{r-1}&0&0&\cdots&1\\
-n_r&0&0&\cdots&0
\end{pmatrix}
\begin{pmatrix}a_1\\\vdots \\a_r\end{pmatrix}~.
$$
The projections $A^{\oplus r}\to A$ to the first and the $r$th
factors are denoted $p_1$, $p_r$.
\begin{lemma}\label{ex-2}We have
\begin{equation}
I_k=\begin{cases}
0&(k<0)\\
\{x\cdot n_r \mid x\in A\}&(k=0)\\
I_{0}+ J_{k}&(k>0)
\end{cases}~~,
\end{equation}
where $J_k=p_r({\rm Ker}\, N^k)$.
\end{lemma}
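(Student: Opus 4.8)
The plan is to transport the description of $I_k^\lambda$ from Lemma \ref{ex-1} through the projection $\pi\colon A[\lambda]\to A=A[\lambda]/\lambda A[\lambda]$ using $I_k=\pi(I_k^\lambda)$ from eq.\eqref{ik2}, and to identify the image in each of the three ranges of $k$. The case $k<0$ is immediate: by Lemma \ref{ex-1}, $I_k^\lambda=\lambda^{-k}\bm n\cdot A[\lambda]$, and since $-k\geq 1$ every element of $I_k^\lambda$ is divisible by $\lambda$, so $\pi(I_k^\lambda)=0$. The case $k=0$ is the crucial computational step. Here $I_0^\lambda=\bm n\cdot A[\lambda]$, so $I_0=\pi(\bm n\cdot A[\lambda])$ consists of the constant terms (in $\lambda$) of products $\bm n\cdot \bm x'$ with $\bm x'\in A[\lambda]$. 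Writing $\bm n=\lambda^r+n_1\lambda^{r-1}+\cdots+n_r$ and $\bm x'=\sum_j b_j\lambda^j$, the $\lambda^0$-coefficient of $\bm n\cdot\bm x'$ is $n_r b_0$; as $b_0$ ranges over all of $A$, we get exactly $\{n_r\cdot x\mid x\in A\}$, which is the claimed description of $I_0$. (Note $n_r\cdot A$ is an ideal of $A$ since $n_r\in A$.)

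For $k>0$ the plan is to combine the two direct summands $I_0^\lambda$ and $J_k^\lambda$ of Lemma \ref{ex-1}, using that $\pi$ is additive, so $I_k=\pi(I_0^\lambda)+\pi(J_k^\lambda)=I_0+\pi(J_k^\lambda)$; it then remains to prove $\pi(J_k^\lambda)=J_k:=p_r(\mathrm{Ker}\,N^k)$. The key observation is that an element $\bm x''=\sum_{i=1}^r a_i\lambda^{r-i}\in A^{<r}[\lambda]$, identified with the column vector $(a_1,\ldots,a_r)^{\mathsf t}\in A^{\oplus r}$, satisfies: $\lambda\bm x''$ is divisible by $\bm n$ if and only if the result of that division, reduced modulo $\lambda$ and read off as a column vector, equals $N(a_1,\ldots,a_r)^{\mathsf t}$. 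Concretely, polynomial division of $\lambda\cdot\bm x''$ (degree $\leq r$, monic coefficient $a_1$ in degree $r$) by the monic $\bm n$ gives quotient $a_1$ and remainder $\lambda\bm x''-a_1\bm n$, whose vector of coefficients in degrees $r-1,\ldots,0$ is precisely $N(a_1,\ldots,a_r)^{\mathsf t}$. Iterating, $\lambda^k\bm x''$ is divisible by $\bm n$ exactly when $N^k(a_1,\ldots,a_r)^{\mathsf t}=0$, i.e.\ $(a_1,\ldots,a_r)^{\mathsf t}\in\mathrm{Ker}\,N^k$; and under $\pi$ the element $\bm x''=\sum a_i\lambda^{r-i}$ maps to its $\lambda^0$-coefficient $a_r=p_r(a_1,\ldots,a_r)^{\mathsf t}$. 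Hence $\pi(J_k^\lambda)=p_r(\mathrm{Ker}\,N^k)=J_k$, as desired.

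The main obstacle I anticipate is verifying carefully that the companion-type matrix $N$ is exactly the map induced by "multiply by $\lambda$ modulo $\bm n$" on $A^{\oplus r}\cong A[\lambda]/\bm n A[\lambda]$ with the basis $\lambda^{r-1},\ldots,\lambda,1$ — in particular getting the signs and the placement of the $1$'s right, and checking that the divisibility condition "$\lambda^k\bm x''$ divisible by $\bm n$" translates to $N^k$ (not $N^{k-1}$ or a shifted power) annihilating the coefficient vector. This is the one place where an off-by-one or a sign error would propagate. Once that dictionary between polynomial division by $\bm n$ and the linear map $N$ is pinned down, the three cases follow by direct substitution into Lemma \ref{ex-1} together with the additivity of $\pi$, and there is nothing further of substance to check.
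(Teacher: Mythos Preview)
Your proposal is correct and follows essentially the same route as the paper: both reduce to showing $\pi(J_k^\lambda)=J_k$ for $k>0$, and both do so by recognising that the companion matrix $N$ encodes ``multiply by $\lambda$ and take the remainder upon division by $\bm n$'' on $A^{<r}[\lambda]\cong A^{\oplus r}$, so that $\lambda^k\bm x$ is divisible by $\bm n$ precisely when $N^k\rho(\bm x)=0$. Your treatment of the easy cases $k<0$ and $k=0$ is more explicit than the paper's (which dismisses them as immediate from Lemma~\ref{ex-1}), but the substance is identical.
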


\begin{proof}
By Lemma \ref{ex-1},
it is enough to show that 
$\pi(J_k^{\lambda})=J_k$ for $k>0$.
Let $\rho:A^{<r}[\lambda]\rightarrow A^{\oplus r}$
be the isomorphism
$\sum_{i=1}^r a_i \lambda^{r-i}\mapsto 
{}^t(a_1,\ldots,a_r)$.
Notice that 
$\rho^{-1}\circ N\circ \rho:A^{<r}[\lambda]\to A^{<r}[\lambda] $
maps $\bm{x}$
to the remainder of $\lambda \bm{x}$ divided by $\bm{n}$.
By induction on $k$, we can show that 
\begin{equation}\label{division}
\lambda^k \bm{x}
=\sum_{i=0}^{k-1}
(p_1\circ N^i \circ \rho)({\bm x})
\lambda^{k-1-i}\cdot \bm{n}
+(\rho^{-1}\circ N^k\circ \rho)({\bm x})
\quad (\bm{x}\in A^{<r}[\lambda])~.
\end{equation}
Thus  we obtain
$$
J_k^{\lambda}=\{\bm{x}\in A^{<r}[\lambda]\mid
\rho({\bm x})\in \mathrm{Ker}\,N^k
\}~.
$$
From this $\pi(J_k^{\lambda})=J_k$ follows.
\end{proof}

\begin{lemma}\label{ex-3}
We have
\begin{equation}\begin{split}\nonumber
&g_{0}(x\cdot n_r,y\cdot n_r)=g(x\cdot y,n_r)~,
\\
&g_k(\bar{x},\bar{y})
=g(x, p_1(N^{k-1}{\vec{y}}\,))\quad (k>0,~x,y\in J_k)~,
\end{split}\end{equation}
where $\vec{y}\in \mathrm{Ker}\,N^k$ is any lift of $y$
satisfying $p_r(\vec{y})=y$.
\end{lemma}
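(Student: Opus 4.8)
The plan is to compute $g_\bullet$ directly from its defining formula \eqref{gk} as a residue, feeding in the explicit generators of $I_k^{\lambda}$ and $I_k$ provided by Lemmas \ref{ex-1} and \ref{ex-2} together with the division identity \eqref{division}. Throughout I extend the Frobenius form $g$ of $A$ by $K[\lambda,\lambda^{-1}]$-bilinearity, so that it pairs with elements such as $\bm{n}^{-1}\in A[\lambda,\lambda^{-1}]$ (the defining series for $\bm{n}^{-1}$ is finite because the $n_i$ lie in the nilradical, which is nilpotent); on this extension one still has $g(\bm{a}\cdot\bm{b},\bm{c})=g(\bm{a},\bm{b}\cdot\bm{c})$, and I will use this to move the factor $\bm{n}^{\pm 1}$ across the pairing.

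First I would treat $k=0$. Since $\bm{n}$ is monic in $\lambda$ with $\lambda$-constant term $n_r$, the element $\bm{n}\cdot x\in A[\lambda]$ lies in $I_0^{\lambda}$ (Lemma \ref{ex-1}) and satisfies $\pi(\bm{n}\cdot x)=n_r\cdot x$, so it is a lift of $x\cdot n_r\in I_0$. Commutativity and invariance give
\[
g^{\lambda}(\bm{n}\cdot x,\bm{n}\cdot y)=g\big((\bm{n}\cdot x)\cdot(\bm{n}\cdot y),\bm{n}^{-1}\big)=g(\bm{n}\cdot x\cdot y,1)=\sum_{i=0}^{r}\lambda^{r-i}\,g(n_i\cdot x\cdot y,1),
\]
with $n_0:=1$; this is a polynomial in $\lambda$, so $g_0(x\cdot n_r,y\cdot n_r)=\mathrm{Res}_{\lambda=0}\lambda^{-1}g^{\lambda}(\bm{n}\cdot x,\bm{n}\cdot y)$ is its coefficient of $\lambda^{0}$, namely $g(n_r\cdot x\cdot y,1)=g(x\cdot y,n_r)$ by invariance and symmetry. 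This is the first claimed formula.

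For $k>0$ and $x,y\in J_k=p_r(\mathrm{Ker}\,N^k)$, I would choose $\vec{x},\vec{y}\in\mathrm{Ker}\,N^k$ with $p_r(\vec{x})=x$, $p_r(\vec{y})=y$, and take the lifts $\bm{x}=\rho^{-1}(\vec{x})$, $\bm{y}=\rho^{-1}(\vec{y})$, where $\rho$ is the isomorphism from the proof of Lemma \ref{ex-2}; these lie in $J_k^{\lambda}\subset I_k^{\lambda}$, and since the $\lambda^{0}$-term of $\rho^{-1}(\vec{z})$ equals $p_r(\vec{z})$ we have $\pi(\bm{x})=x$, $\pi(\bm{y})=y$. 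Applying \eqref{division} to $\bm{y}$ and using $N^k\vec{y}=0$, its last summand vanishes and
\[
\lambda^k\bm{y}=\bm{n}\cdot P_y,\qquad P_y:=\sum_{i=0}^{k-1}p_1(N^i\vec{y})\,\lambda^{k-1-i}\in A[\lambda],
\]
a polynomial of $\lambda$-degree $\le k-1$ whose coefficient of $\lambda^{0}$ is $p_1(N^{k-1}\vec{y})$. Hence, by invariance, $g^{\lambda}(\bm{x},\bm{y})=g(\bm{x}\cdot\bm{y}\cdot\bm{n}^{-1},1)=\lambda^{-k}g(\bm{x}\cdot P_y,1)$, so $\lambda^{k-1}g^{\lambda}(\bm{x},\bm{y})=\lambda^{-1}g(\bm{x}\cdot P_y,1)$ and $g_k(\bar{x},\bar{y})$ is the coefficient of $\lambda^{0}$ in $g(\bm{x}\cdot P_y,1)$. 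Since $\bm{x}$ and $P_y$ are polynomials in $\lambda$ with no negative powers, this coefficient is $g$ applied to the product of their $\lambda^{0}$-terms, i.e.\ to $x\cdot p_1(N^{k-1}\vec{y})$; by invariance it equals $g(x,p_1(N^{k-1}\vec{y}))$, the second claimed formula.

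The only mildly delicate point is the last step: to extract the residue one needs to know that $\bm{x}\cdot P_y$ has no negative powers of $\lambda$ and that its $\lambda^{0}$-term is precisely the product of the $\lambda^{0}$-terms of the two factors, with no cross terms. This holds because $\deg_\lambda\bm{x}=\deg_\lambda\rho^{-1}(\vec{x})\le r-1$ and $\deg_\lambda P_y\le k-1$ are both nonnegative. Independence of $g_k(\bar{x},\bar{y})$ from the auxiliary choices of $\vec{x},\vec{y}$ (beyond $p_r(\vec{x})=x$, $p_r(\vec{y})=y$) needs no separate argument, since Lemma \ref{g1} already guarantees that \eqref{gk} is well defined; by symmetry of $g_k$ the alternative expression $g_k(\bar{x},\bar{y})=g(p_1(N^{k-1}\vec{x}),y)$, obtained by applying \eqref{division} to $\bm{x}$ instead, is equivalent to the stated one. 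I expect this residue/degree bookkeeping, rather than any conceptual issue, to be the main (and minor) obstacle.
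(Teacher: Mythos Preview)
Your argument is correct and follows essentially the same route as the paper's proof: for $k>0$ you use the division identity \eqref{division} to write $\lambda^k\bm{y}=\bm{n}\cdot P_y$ (since $\vec{y}\in\mathrm{Ker}\,N^k$), use the invariance of $g$ to cancel the $\bm{n}^{-1}$ against this factor, and then read off the $\lambda^0$-coefficient, exactly as the paper does in its one-line computation $g_k(\bar{x},\bar{y})=g^{\lambda}\bigl(\bm{x},\tfrac{\lambda^k\bm{y}}{\bm{n}}\bigr)\big|_{\lambda=0}=g(x,p_1(N^{k-1}\rho(\bm{y})))$. Your treatment of $k=0$ spells out what the paper dismisses as ``clear''; it is fine.
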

\begin{proof} The case $k=0$ is clear.
To show the case $k>0$, let ${\bm x},{\bm y}\in J_k^{\lambda}$ be lifts of 
$x,y$. By eq.\eqref{division}, we have
$$
g_k(\bar{x},\bar{y})=g^{\lambda}\Big({\bm x},\frac{\lambda^k{\bm y}}{\bm n}\Big)
\Big |_{\lambda=0}=g(x,p_1(N^{k-1}\rho(\bm{y})))~.
$$
\end{proof}

As a corollary of Theorem \ref{thm-MFA}, we obtain
\begin{proposition}
$(A,I_{\bullet},g_{\bullet})$ 
with $I_{\bullet}, g_{\bullet}$ given
in Lemmas \ref{ex-2}, \ref{ex-3},
is a mixed Frobenius algebra.
\end{proposition}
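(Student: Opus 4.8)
The plan is to deduce the statement directly from Theorem~\ref{thm-MFA}, once we check that the triple $(A[\lambda],\ast,g^{\lambda})$ satisfies its hypotheses, and then to match the resulting nondegenerate filtration $(I_{\bullet},g_{\bullet})$ with the explicit description furnished by Lemmas~\ref{ex-2} and~\ref{ex-3}.

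First I would record the algebra structure: take $H_K^{\lambda}=A[\lambda]$ with coefficientwise multiplication, which is associative, commutative and unital, so that $H_K=A[\lambda]/\lambda A[\lambda]=A$ and the induced product on $H_K$ is nothing but the original product of $A$. No identification issue arises there.

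Second --- and this is where the only real content lies --- I would verify that $g^{\lambda}$ is a localized $K[\lambda]$-metric. Write $\bm n=\lambda^{r}(1+u)$ with $u=n_1\lambda^{-1}+\cdots+n_r\lambda^{-r}$. Since $A$ is commutative its nilpotent elements form an ideal, so $u$ is nilpotent in $A[\lambda,\lambda^{-1}]$; hence $1+u$ is a unit there, $(1+u)^{-1}=\sum_{j\ge 0}(-u)^{j}$ is a \emph{finite} sum, and $\bm n^{-1}=\sum_{j\ge 0}\lambda^{-(j+1)r}(\lambda^{r}-\bm n)^{j}\in A[\lambda,\lambda^{-1}]$. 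Consequently $g^{\lambda}(\bm x,\bm y)=g(\bm x\cdot\bm y,\bm n^{-1})$ indeed takes values in $K[\lambda,\lambda^{-1}]$, and it is visibly $K[\lambda]$-bilinear and symmetric, by commutativity of the product and symmetry of $g$. For unimodularity, invariance of $g$ gives $g^{\lambda}(\bm x,\bm y)=\tilde g(\bm x,\bm n^{-1}\cdot\bm y)$, where $\tilde g$ is the $K[\lambda,\lambda^{-1}]$-bilinear extension of $g$; fixing a $K$-basis of $A$, regarded as a $K[\lambda]$-basis of $A[\lambda]$, the Gram matrix of $g^{\lambda}$ is then $G\cdot P$, with $G$ the invertible constant Gram matrix of $g$ and $P$ the matrix of multiplication by $\bm n^{-1}$. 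Since multiplication by $1+u$ is unipotent on $A$ (identity plus a nilpotent operator) it has determinant $1$, while multiplication by $\lambda^{r}$ contributes $\lambda^{r\dim_K A}$; hence $\det P=\lambda^{-r\dim_K A}$ and $\det(G\cdot P)$ is a nonzero scalar times a power of $\lambda$, i.e. a unit in $K[\lambda,\lambda^{-1}]$. Finally, $\ast$-invariance of $g^{\lambda}$ is immediate: $g^{\lambda}(\bm x\ast\bm y,\bm z)=g(\bm x\cdot\bm y\cdot\bm z,\bm n^{-1})=g^{\lambda}(\bm x,\bm y\ast\bm z)$.

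Third, with these checks done, Theorem~\ref{thm-MFA} (via Lemma~\ref{induced-nondegenerate-filtration}) yields that $(A,I_{\bullet},g_{\bullet})$ is a mixed Frobenius algebra, where $I_{\bullet}$ and $g_{\bullet}$ are the ones produced by eqs.~\eqref{ik2} and~\eqref{gk} from the ideals $I_k^{\lambda}$ of eq.~\eqref{def-iklambda}. Lemma~\ref{ex-1} computes those ideals, and Lemmas~\ref{ex-2} and~\ref{ex-3} rewrite \eqref{ik2} and~\eqref{gk} in the closed forms appearing in the statement; hence the two descriptions of $(I_{\bullet},g_{\bullet})$ agree and the proposition follows. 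The only step needing care --- and thus the ``main obstacle,'' mild as it is --- is the unimodularity check: it is precisely there that the nilpotency of $n_1,\dots,n_r$ enters, both to make the geometric series for $\bm n^{-1}$ terminate (so that $\bm n^{-1}\in A[\lambda,\lambda^{-1}]$) and to force the determinant of multiplication by $\bm n$ to be a monomial in $\lambda$ rather than a genuine polynomial.
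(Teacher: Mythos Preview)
Your proposal is correct and follows exactly the route the paper takes: the proposition is stated there simply as a corollary of Theorem~\ref{thm-MFA}, and you have filled in the verification of its hypotheses (that $g^{\lambda}$ is a localized $K[\lambda]$-metric and is $\ast$-invariant), which the paper leaves implicit. Your unimodularity argument via the nilpotency of $u$ and the unipotence of multiplication by $1+u$ is the natural one, and the identification of the resulting $(I_{\bullet},g_{\bullet})$ with the formulas of Lemmas~\ref{ex-2} and~\ref{ex-3} is precisely how those lemmas are set up.
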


When $r=1$, 
$J_k=\{x\in A\mid n_1^k\cdot x=0\}$
and $g_k(\bar{x},\bar{y})=g(x\cdot y, (-n_1)^{k-1})$ ($k>0$).
This is 
the nilpotent construction in \cite[\S 3]{KonishiMinabe12}
up to shifts of the filtration.

\section{Mixed Frobenius structure}\label{sec:MFS}
In this section, the base field is $K=\mathbb{C}$, 
a manifold means a complex manifold and 
vector bundles are assumed to be holomorphic.
For a manifold $M$,
$T_M$ denotes the tangent bundle,
$\mathcal{T}_M$ its sheaf of local sections
and we write $x\in \mathcal{T}_M$ to mean that
$x$ is a local section of $T_M$. 

Although definitions here are for complex manifolds,
they can be easily translated to $C^{\infty}$-manifolds
($K=\mathbb{R}$).

\subsection{Saito structure}
The following definition
is due to Sabbah \cite[Ch.VII]{Sabbah}. 
\begin{definition}
\label{def-Saito-structure}
Let $M$ be a manifold. A Saito structure (without a metric) on $M$ 
consists of 
\begin{itemize}
\item a torsion-free flat connection $\nabla$ on $T_M$,
\item an associative and commutative $\mathcal{O}_M$-bilinear multiplication $\circ$ 
on $\mathcal{T}_M$ with a global unit section $e$, and
\item a global vector field $E$ on $M$ (called {\it the Euler vector field}),  
\end{itemize}
satisfying the following conditions.\\
(i) The multiplication $C_x$ by $x\in \mathcal{T}_M$ regarded as a local section 
of ${\rm End}\, T_M$
satisfies
\begin{equation}\label{mfs1}
\nabla_x C_y-\nabla_y C_x=C_{[x,y]}~,
\end{equation}
and the unit vector field $e$ is flat, i.e. $\nabla e=0$.\\
(ii) The vector field $E$ satisfies $\nabla (\nabla E)=0$ and
\begin{equation}\label{E1}
[E,x\circ y]-[E,x]\circ y-x\circ[E,y]=x\circ y\quad(x,y\in\mathcal{T}_M)~.
\end{equation}
\end{definition} 
In this article, we call a Saito structure without a metric
{\it  a Saito structure} for short.
\begin{remark}
In \cite{Sabbah},
a Saito structure is defined in terms of 
the symmetric Higgs field instead of the multiplication.
As explained in \cite[Ch.0.13]{Sabbah}, a symmetric Higgs field corresponds to
a multiplication and Definition \ref{def-Saito-structure} is equivalent to 
that in {\it loc.cit.}.
\end{remark}

\begin{lemma}\label{lemma:vector-potential}
Given a Saito structure $(\nabla,\circ,E)$ on a manifold $M$,  
there exists a local vector field $\mathcal{G}\in \mathcal{T}_M$
such that
\begin{equation}\label{eq:vector-potential}
\nabla_x\nabla_y \,\mathcal{G}=x\circ y 
\end{equation}
holds for any flat vector fields $x,y\in \mathcal{T}_M$.
Moreover
$\nabla\nabla([E,\mathcal{G}]-\mathcal{G})=0$.  
\end{lemma}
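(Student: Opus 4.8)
To prove Lemma~\ref{lemma:vector-potential} I would proceed as follows.

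The statement is local, so I would restrict to a coordinate neighbourhood and begin by choosing $\nabla$-flat coordinates $t^1,\dots,t^n$ (which exist because $\nabla$ is flat and torsion-free), with flat frame $\partial_i:=\partial/\partial t^i$ and structure functions $\partial_i\circ\partial_j=\sum_kC_{ij}^k\,\partial_k$. Commutativity of $\circ$ gives $C_{ij}^k=C_{ji}^k$, and, since $[\partial_i,\partial_l]=0$, evaluating \eqref{mfs1} on the flat frame yields $\partial_lC_{ij}^k=\partial_iC_{lj}^k$; thus for each fixed $k$ the symmetric matrix of functions $(C_{ij}^k)_{i,j}$ has totally symmetric first derivatives, hence — by two applications of the Poincar\'e lemma, with $\partial_lC_{ij}^k=\partial_iC_{lj}^k$ and $C_{ij}^k=C_{ji}^k$ serving as the two integrability conditions — equals the Hessian $(\partial_i\partial_j\mathcal{G}^k)$ of a local function $\mathcal{G}^k$. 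Setting $\mathcal{G}:=\sum_k\mathcal{G}^k\partial_k$ gives $\nabla_{\partial_i}\nabla_{\partial_j}\mathcal{G}=\partial_i\circ\partial_j$, and \eqref{eq:vector-potential} follows for all flat $x,y$ by $\mathbb{C}$-bilinearity; note $\mathcal{G}$ is only local and far from unique, which is all the lemma claims. For the second half I would also record the $\mathcal{O}_M$-bilinear identity
\[
\nabla_u\nabla_v\mathcal{G}=u\circ v+\nabla_{\nabla_uv}\mathcal{G}\qquad(u,v\in\mathcal{T}_M),
\]
which holds because $\nabla_u\nabla_v\mathcal{G}-\nabla_{\nabla_uv}\mathcal{G}$ is symmetric and tensorial in $(u,v)$ (flatness and torsion-freeness of $\nabla$) and agrees with $\circ$ on the flat frame.

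For the moreover part, the first observation is that $\nabla(\nabla E)=0$ implies $[E,\nabla_uv]=\nabla_{[E,u]}v+\nabla_u[E,v]$ for all $u,v$ (i.e.\ $E$ preserves $\nabla$): the difference of the two sides is $\mathcal{O}_M$-bilinear in $(u,v)$, and on the flat frame it equals $\nabla_{\partial_i}\nabla_{\partial_j}E$ (using $[E,\partial_j]=-\nabla_{\partial_j}E$ and $\nabla\partial_j=0$), which vanishes by hypothesis. Using this twice, I would peel the bracket off $\nabla_x\nabla_y[E,\mathcal{G}]$ for flat $x,y$: from $\nabla_y[E,\mathcal{G}]=[E,\nabla_y\mathcal{G}]-\nabla_{[E,y]}\mathcal{G}$ and $\nabla_x[E,\nabla_y\mathcal{G}]=[E,\nabla_x\nabla_y\mathcal{G}]-\nabla_{[E,x]}\nabla_y\mathcal{G}$, together with $\nabla_x\nabla_y\mathcal{G}=x\circ y$, one obtains
\[
\nabla_x\nabla_y[E,\mathcal{G}]=[E,\,x\circ y]-\nabla_{[E,x]}\nabla_y\mathcal{G}-\nabla_x(\nabla_{[E,y]}\mathcal{G}).
\]
Now the displayed bilinear identity applies: since $y$ is flat, $\nabla_{[E,x]}\nabla_y\mathcal{G}=[E,x]\circ y$; and since $\nabla_x[E,y]=-\nabla_x\nabla_yE=0$ (again $\nabla(\nabla E)=0$, with $x,y$ flat), $\nabla_x(\nabla_{[E,y]}\mathcal{G})=x\circ[E,y]$. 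Hence
\[
\nabla_x\nabla_y[E,\mathcal{G}]=[E,\,x\circ y]-[E,x]\circ y-x\circ[E,y]\overset{\eqref{E1}}{=}x\circ y=\nabla_x\nabla_y\mathcal{G},
\]
so $\nabla_x\nabla_y([E,\mathcal{G}]-\mathcal{G})=0$ for all flat $x,y$; since $(u,v)\mapsto\nabla_u\nabla_vZ-\nabla_{\nabla_uv}Z$ is tensorial, this is equivalent to $\nabla\nabla([E,\mathcal{G}]-\mathcal{G})=0$.

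The existence half is the classical construction of the vector potential and poses no real difficulty once \eqref{mfs1} is recognised as the relevant integrability condition. The delicate step is the moreover part: one must commute $[E,-]$ past $\nabla$ while handling the non-flat fields $[E,x]=-\nabla_xE$ and $[E,y]=-\nabla_yE$, and the computation only closes because $\nabla(\nabla E)=0$ kills $\nabla_x[E,y]$ and because $\nabla\nabla\mathcal{G}$ is genuinely tensorial; so I would pin down those two facts cleanly before starting the manipulation. (A fully coordinate computation is equally possible and amounts to matching the flat-coordinate expansion of \eqref{E1}, with $E^k=\sum_lA^k_lt^l+b^k$ as dictated by $\nabla(\nabla E)=0$, against $\partial_i\partial_j[E,\mathcal{G}]^k$.)
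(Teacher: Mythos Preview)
Your proof is correct and, for the existence half, follows exactly the paper's own argument: flat coordinates, commutativity giving $C_{ij}^k=C_{ji}^k$, condition \eqref{mfs1} giving $\partial_lC_{ij}^k=\partial_iC_{lj}^k$, and then the Poincar\'e lemma producing the components $\mathcal{G}^k$. For the ``moreover'' part the paper says only that it ``follows from eq.\eqref{E1}'' with no further detail; your computation --- isolating the tensorial identity $\nabla_u\nabla_v\mathcal{G}=u\circ v+\nabla_{\nabla_uv}\mathcal{G}$, deriving $[E,\nabla_uv]=\nabla_{[E,u]}v+\nabla_u[E,v]$ from $\nabla(\nabla E)=0$, and then peeling $[E,-]$ past $\nabla_x\nabla_y$ --- is precisely the kind of argument the paper is implicitly invoking, and it is carried out cleanly.
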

We call $\mathcal{G}$ satisfying eq.\eqref{eq:vector-potential}
a (local) potential vector field.

\begin{proof}
Let $\{t_{\alpha}\}_{\alpha}$ be a local coordinate system on $M$
whose corresponding local frame fields $\{ \partial_{\alpha}\}_{\alpha}$ are $\nabla$-flat.
Let us write $\partial_{\alpha}\circ \partial_{\beta}=
\sum_{\gamma} C_{\alpha\beta}^{\gamma} \partial_{\gamma}$.
The commutativity implies $C_{\alpha\beta}^{\gamma}=
C_{\beta\alpha}^{\gamma}$.
Eq.\eqref{mfs1} is equivalent to 
$\partial_{\alpha} C_{\beta\gamma}^{\delta}=
\partial_{\beta} C_{\alpha\gamma}^{\delta}$.
Therefore there exist $\mathcal{G}^{\gamma}\in \mathcal{O}_M$
such that  $\partial_{\alpha}\partial_{\beta} \mathcal{G}^{\gamma}=
C_{\alpha\beta}^{\gamma}$.
Then $\mathcal{G}:=\sum_{\gamma} \mathcal{G}^{\gamma}\partial_{\gamma}$
satisfies eq.\eqref{eq:vector-potential}.
The second statement follows from
eq.\eqref{E1}.
\end{proof}

\begin{remark}
It is shown that Frobenius manifold structure defined by 
Dubrovin \cite{Dubrovin} 
is equivalent to
a Saito structure with a metric \cite[Ch.VII, Prop.2.2]{Sabbah}.
In the case when $M$ is a Frobenius manifold,
eq.\eqref{mfs1} is equivalent to the potentiality condition,
and the gradient vector field of the potential function is
a potential vector field.
\end{remark}

\subsection{Mixed Frobenius structure}
\begin{definition}\label{def-MFS}
A mixed Frobenius structure (MFS) on a manifold $M$ consists of 
a Saito structure $(\nabla,\circ,E)$ together with
\begin{itemize}
\item an increasing sequence of subbundles $I_{\bullet}$ of $T_M$, and
\item metrics (i.e. nondegenerate symmetric $\mathcal{O}_M$-bilinear forms)
$g_{\bullet}$ on $\mathcal{I}_{\bullet}/\mathcal{I}_{\bullet-1}$, 
\end{itemize}
satisfying the following conditions.\\
(i) $(\circ,I_{\bullet},g_{\bullet})$ is
 a mixed Frobenius algebra structure on $T_{M}$,
 i.e.  $\mathcal{I}_{k}$ are ideals of $\mathcal{T}_{M}$ 
 and  all $g_k$'s are $\circ$-invariant.\\  
(ii) The subbundles $I_k$ ($k\in \mathbb{Z}$) are preserved by $\nabla$
and the metrics are compatible with $\nabla$, i.e. 
\begin{equation}
z g_k(\overline{x},\overline{y})=g_k(\overline{\nabla_z\, x},\overline{y})+
g_k(\overline{x},\overline{\nabla_z\,y})\quad 
(k\in\mathbb{Z},~z\in \mathcal{T}_M,~ x,y\in \mathcal{I}_k)~.
\end{equation}
Here $x\mapsto \overline{x}$ denotes the projection $\mathcal{I}_k\to
\mathcal{I}_k/\mathcal{I}_{k-1}$.
\\
(iii) The subbundles $I_k$ ($k\in\mathbb{Z}$) are preserved by
$[E,-]$ and
there exists a collection of numbers $\{D_k\in K\}_{k\in \mathbb{Z}}$ 
(called {\it charges}) such that
\begin{equation}\label{E2}
Eg_k(\overline{x},\overline{y})-g_k(\overline{[E,x]},\overline{y})
-g_k(\overline{x},\overline{[E,y]})=
(2-D_k)g_k(\overline{x},\overline{y})\quad
(k\in \mathbb{Z},~x,y\in \mathcal{I}_k)~.
\end{equation}
\end{definition}
A MFS with the trivial filtration $I_{\bullet}$  (i.e. $0\subset T_M$)
is the same as a Saito structure with a metric \cite{Sabbah}
and also the same as a Frobenius manifold structure \cite{Dubrovin}.

\begin{lemma}
If $(\nabla,\circ,E,I_{\bullet},g_{\bullet}) $ is a MFS on a manifold $M$,
then each $\mathcal{I}_k\subset \mathcal{T}_M$ $(k\in \mathbb{Z})$ is involutive.
\end{lemma}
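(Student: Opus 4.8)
The plan is to show that $\mathcal{I}_k$ is closed under the Lie bracket, which—since each $\mathcal{I}_k$ is already an $\mathcal{O}_M$-subbundle of $\mathcal{T}_M$—is exactly the involutivity (Frobenius integrability) statement. The key structural input is condition (i) of the MFS (Definition~\ref{def-MFS}), which says each $\mathcal{I}_k$ is an \emph{ideal} for the $\circ$-multiplication, together with condition (ii), which says that $\nabla$ preserves $\mathcal{I}_k$, i.e. $\nabla_z x \in \mathcal{I}_k$ whenever $x \in \mathcal{I}_k$ and $z \in \mathcal{T}_M$.

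First I would use the torsion-freeness of $\nabla$: for any $x, y \in \mathcal{T}_M$ one has $[x,y] = \nabla_x y - \nabla_y x$. So if $x, y \in \mathcal{I}_k$, then since $\nabla$ preserves $\mathcal{I}_k$ (condition (ii)) we get $\nabla_x y \in \mathcal{I}_k$ and $\nabla_y x \in \mathcal{I}_k$, hence $[x,y] \in \mathcal{I}_k$. This already gives involutivity. In fact this argument uses only condition (ii) and torsion-freeness, not the ideal property or the multiplication at all; the role of condition (i) is only to guarantee that the $\mathcal{I}_k$ are genuine subbundles (locally free of constant rank) rather than just submodules, so that the Frobenius theorem applies and "involutive" has its usual meaning. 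I would state it in this order: recall torsion-freeness, recall that $\nabla$ preserves each $\mathcal{I}_k$, and conclude $[\mathcal{I}_k, \mathcal{I}_k] \subseteq \mathcal{I}_k$.

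There is essentially no obstacle here; the only thing to be slightly careful about is the logical dependency. The statement as phrased presupposes that $I_k$ is a subbundle (which is part of Definition~\ref{def-MFS}, where the $I_\bullet$ are declared to be subbundles of $T_M$), so one does not need to re-derive local freeness. If one wanted a self-contained remark one could also note, as an alternative route, that eq.~\eqref{mfs1} gives $\nabla_x C_y - \nabla_y C_x = C_{[x,y]}$, so applying this to the unit $e$ and using that $\mathcal{I}_k$ is a $\circ$-ideal shows $C_{[x,y]}$ maps $\mathcal{T}_M$ into $\mathcal{I}_k$ when $x,y \in \mathcal{I}_k$—but this only recovers that $[x,y]\circ e = [x,y] \in \mathcal{I}_k$, so it reduces to the same conclusion by a more roundabout path. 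I would therefore present the short torsion-free argument as the proof and perhaps append a one-line remark that it is consistent with eq.~\eqref{mfs1}.

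Concretely, the write-up is: let $x, y \in \mathcal{I}_k$. By condition (ii) of Definition~\ref{def-MFS}, $\nabla_x y, \nabla_y x \in \mathcal{I}_k$. Since $\nabla$ is torsion-free, $[x,y] = \nabla_x y - \nabla_y x \in \mathcal{I}_k$. As $\mathcal{I}_k$ is the sheaf of sections of the subbundle $I_k$, this shows $I_k$ is involutive. $\qed$
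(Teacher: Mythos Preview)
Your proof is correct and is essentially the same as the paper's: the paper's proof is the one-line observation that the result follows because $I_k$ is preserved by the torsion-free connection $\nabla$, which is exactly your core argument $[x,y]=\nabla_x y-\nabla_y x\in\mathcal{I}_k$. Your additional commentary (on condition (i) and the alternative via eq.~\eqref{mfs1}) is unnecessary for the proof but does no harm.
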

\begin{proof}
The lemma follows from the condition that 
$I_k$ is preserved by the torsion free affine connection $\nabla$.
\end{proof}
As a consequence, 
there exists a system of flat local coordinate system
$\{t_{k\alpha}\}_{k\in\mathbb{Z},1\leq \alpha\leq \dim I_k/I_{k-1}}$
such that $\{t_{k\alpha}\}_{k\le l, 1\leq \alpha\leq \dim I_k/I_{k-1}}$
is a local coordinate system of leaves of $I_l$.

\begin{remark}\label{rem:definitions}
The definition of MFS in this article is different from that in our 
previous article \cite[Definition 6.2]{KonishiMinabe12} in a few points.

Firstly charges $\{D_k\}$ are allowed to take any values.
The merit is that any mixed Frobenius algebra has a MFS
(see Proposition \ref{Algebra-has-MFS2} below)
whereas the condition $D_k=D_0-k$ in the old definition is quite restrictive. 

Secondly the compatibility conditions of the multiplication with the connection and the Euler vector field
are strengthened as we adopt the Saito structure.
(Compare \cite[eqs.(6.2), (6.9)]{KonishiMinabe12} with eqs.\eqref{mfs1}, \eqref{E1}). 
The reason for this change is  
the existence of a local potential vector field (Lemma \ref{lemma:vector-potential})
and the flat meromorphic connection \cite{Sabbah} on the Saito structure.
We believe that
they may play important roles in formulating
the local mirror symmetry as equivalence of MFS's
(cf. \S\ref{sec:LMS}).
\end{remark}

\subsection{An algebra with a Frobenius filtration has a MFS}
Let $(A,I_{\bullet},g_{\bullet})$ be a mixed Frobenius algebra.
We assume that
$A=\oplus_{d\in \mathbb{Z}}A_d$ is a graded algebra
satisfying
$I_k=\oplus_d I_k\cap A_d$.
Moreover we assume that there exist
$\{D_k\in \mathbb{Z}\mid k\in \mathbb{Z},~I_k/I_{k-1}\neq 0\}$ such that 
$$
g_k(x,y)=0 \quad \text{unless}\quad |x|+| y|=D_k~.
$$
Here $|x|$ denotes the degree of $x\in A$.
Notice that any mixed Frobenius algebra 
satisfies this assumption with $A=A_0$ and $D_0=0$.

Let $\{e_{k\alpha}\mid k\in \mathbb{Z},1\leq \alpha\leq \dim I_k/I_{k-1}\}$ 
be a homogeneous basis of $A$
such that $\{e_{k\alpha}\mid k\leq l,1\leq \alpha\leq \dim I_l/I_{l-1}\}$ 
is a basis of $I_l$.
Let $\{t_{k\alpha}\}$ be the associated coordinates
of $A$.

\begin{proposition}\label{Algebra-has-MFS2}
The trivial connection $d$, the multiplication on $A$,
the vector field
$$
E=\sum_{k,\alpha}(1-|e_{ka}|)t_{ka}\partial_{ka}~,
$$
and the nondegenerate filtration $(I_{\bullet},g_{\bullet})$
form a MFS on $A$
of charges $\{D_k\}$.
\end{proposition}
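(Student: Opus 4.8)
The plan is to verify directly that all the ingredients of Definition~\ref{def-MFS} are satisfied by the given data. Since the connection is the trivial connection $d$ on the vector space $A$ (viewed as a manifold with global linear coordinates $t_{k\alpha}$), it is automatically flat and torsion-free, and the coordinate vector fields $\partial_{k\alpha}$ are flat. The multiplication $\circ$ is the constant extension of the algebra multiplication on $A$, so $C_x$ has constant matrix entries $C_{k\alpha,l\beta}^{m\gamma}$ in the flat frame; hence $d_x C_y = 0$ for $x,y$ flat and eq.~\eqref{mfs1} holds trivially, and $d e = 0$ since $e$ is the constant unit section. For the Euler vector field, since $d(dE)=0$ is automatic for a linear vector field, the only content is eq.~\eqref{E1}, which I would check on flat frame fields: with $E = \sum (1-|e_{k\alpha}|)t_{k\alpha}\partial_{k\alpha}$ one computes $[E,\partial_{k\alpha}] = -(1-|e_{k\alpha}|)\partial_{k\alpha} = (|e_{k\alpha}|-1)\partial_{k\alpha}$, and then eq.~\eqref{E1} applied to $x=\partial_{k\alpha}$, $y=\partial_{l\beta}$ becomes, after expanding $x\circ y$ in the flat frame, the statement that $C_{k\alpha,l\beta}^{m\gamma}\ne 0$ forces $|e_{m\gamma}| = |e_{k\alpha}| + |e_{l\beta}| - 1$. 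This is precisely the statement that $\circ$ is graded of degree $-1$ in the chosen grading convention, i.e.\ it follows from $A = \bigoplus_d A_d$ being a graded algebra once one accounts for the shift built into the definition of $E$ (the ``$1$'' reflecting that $E$ is a vector field, i.e.\ a derivation of degree matching the cohomological grading). So the verification of the Saito structure axioms reduces to bookkeeping about the grading.

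Next I would check conditions (i), (ii), (iii) of Definition~\ref{def-MFS} for the filtration data. Condition (i) is immediate: by hypothesis $(A, I_\bullet, g_\bullet)$ is a mixed Frobenius algebra, and since the multiplication, filtration, and metrics on $\mathcal{T}_A \cong \mathcal{O}_A \otimes_K A$ are all the constant extensions of those on $A$, the fact that the $I_k$ are ideals and the $g_k$ are $\circ$-invariant passes to the sheaf level verbatim. Condition (ii): the constant subbundles $I_k$ are clearly preserved by the trivial connection $d$ (their local sections in the flat frame have constant coefficients), and the metrics $g_k$ are constant in the flat coordinates, so $z\, g_k(\bar x, \bar y)$ and $g_k(\overline{d_z x},\bar y) + g_k(\bar x, \overline{d_z y})$ are both zero when $x,y$ are flat — hence the compatibility holds by $\mathcal{O}_A$-bilinearity and the Leibniz rule for general sections. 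Condition (iii) is the one requiring the graded hypothesis on $g_\bullet$: I would show that for $x = e_{k\alpha}$ (lying in $I_k$, reducing to a basis vector of $I_k/I_{k-1}$), the flatness of $g_k$ gives $E\, g_k(\bar x,\bar y) = 0$, while $-g_k(\overline{[E,x]},\bar y) - g_k(\bar x, \overline{[E,y]}) = (|e_{k\alpha}| - 1 + |e_{l\beta}| - 1) g_k(\bar x, \bar y)$, which by the hypothesis $g_k(x,y)=0$ unless $|x|+|y|=D_k$ equals $(D_k - 2) g_k(\bar x,\bar y)$; rearranging gives exactly eq.~\eqref{E2} with charge $D_k$, and one must also note $[E,-]$ preserves $I_k$ since it acts on the flat frame by scalars.

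The main obstacle, and where care is genuinely needed, is getting the grading conventions and the sign in the definition of $E$ to mesh correctly with eqs.~\eqref{E1} and~\eqref{E2}. Concretely, one must pin down that $[E, \partial_{k\alpha}]$ carries the weight $|e_{k\alpha}| - 1$ (not $|e_{k\alpha}|$ or $1 - |e_{k\alpha}|$), trace how the ``$1$'' in eq.~\eqref{E1} and the ``$2$'' in eq.~\eqref{E2} arise from treating $E$ as a first-order operator, and confirm the hypothesis ``$g_k(x,y)=0$ unless $|x|+|y|=D_k$'' is exactly what is needed — neither too strong nor too weak — for eq.~\eqref{E2} to yield a single well-defined charge $D_k$ per filtration step. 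Once the degree reasons are organized (for instance by checking everything on the homogeneous basis $\{e_{k\alpha}\}$ and then invoking $\mathcal{O}_A$-bilinearity), all the remaining verifications are routine, and the final remark that ordinary mixed Frobenius algebras fall under this with $A = A_0$, $D_0 = 0$ is then automatic since the hypotheses are trivially met and the Saito-structure conditions all become vacuous except eq.~\eqref{E1}, which holds because with $|e_{k\alpha}| = 0$ everywhere eq.~\eqref{E1} reads $[E, x\circ y] - [E,x]\circ y - x\circ[E,y] = x\circ y$ and $E = \sum t_{k\alpha}\partial_{k\alpha}$ is the standard Euler field.
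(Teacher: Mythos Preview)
The paper states this proposition without proof, so there is nothing to compare your approach against; a direct verification of the axioms in Definition~\ref{def-MFS} is exactly what is expected, and your plan is the right one. The structure of your argument is sound: the Saito axioms reduce to the fact that $A$ is a graded algebra, condition~(i) is the mixed Frobenius algebra hypothesis, condition~(ii) holds because everything is constant in flat coordinates, and condition~(iii) comes from the homogeneity assumption on $g_k$.

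That said, there are two arithmetic slips you should clean up. First, in your check of eq.~\eqref{E1}: with $[E,\partial_{k\alpha}]=(|e_{k\alpha}|-1)\partial_{k\alpha}$ the left-hand side becomes $\sum C_{k\alpha,l\beta}^{m\gamma}\big(|e_{m\gamma}|-|e_{k\alpha}|-|e_{l\beta}|+1\big)\partial_{m\gamma}$, so matching with the right-hand side forces $|e_{m\gamma}|=|e_{k\alpha}|+|e_{l\beta}|$, not $|e_{k\alpha}|+|e_{l\beta}|-1$. In other words $\circ$ must be graded of degree~$0$, which is precisely the hypothesis that $A$ is a graded algebra; no extra shift is involved. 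Second, in your check of eq.~\eqref{E2} there is a sign error: since $g_k(\overline{[E,x]},\bar y)=(|e_{k\alpha}|-1)g_k(\bar x,\bar y)$, the left-hand side equals $0-(|e_{k\alpha}|-1)g_k-(|e_{l\beta}|-1)g_k=(2-|e_{k\alpha}|-|e_{l\beta}|)g_k$, which by the hypothesis $|x|+|y|=D_k$ gives $(2-D_k)g_k$ directly, with no rearranging needed. Neither slip affects the validity of the overall argument.
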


\section{Formal Mixed Frobenius structure}\label{sec:formal-MFS}
In this section we will define a formal (and logarithmic) version of the MFS 
using 
\cite{Gross} as reference.

In this section,  the base field $K$ may be any subfield of $\mathbb{C}$.

\subsection{Notations}\label{sec:notation-formal}
Fix a positive integer $n\in \mathbb{Z}_{>0}$ and a non-negative integer $m\in \mathbb{Z}_{\geq 0}$.
We set
$R=
K[[t_1,\ldots,t_n,q_1,\ldots,q_m]]
$
and let
\begin{equation}\label{eq:monoid}
P=\{ \Phi(t, q)\in R \mid 
\exists d_1, \ldots, d_m \in \mathbb{Z}_{\geq 0}~{\rm s.t.}~
q_1^{-d_1}\cdots q_m^{-d_m} {\Phi}(t, q) \in R^{\times}
\}
\end{equation}
which is a submonoid of $R$. 
Let $M={\rm Spf}\, R$ 
be the formal completion of $K^{n+m}=\mathrm{Spec}\,K[t,q]$
at the origin and let $P_M$ be the 
constant sheaf on $M$ with a stalk $P$. 
Denote by $M^{\dagger}$ the formal scheme $M$ 
equipped with the logarithmic structure 
associated to ${P}_M \hookrightarrow \mathcal{O}_M$.

Let $\mathcal{T}_{M^{\dagger}}$ 
be the sheaf of logarithmic vector fields on $M^{\dagger}$
which is freely generated by $\frac{\partial}{\partial t_{1}},\ldots,
\frac{\partial}{\partial t_n}$ and
$q_1\frac{\partial}{\partial q_1},\ldots,q_m\frac{\partial}{\partial q_m}$
over $\mathcal{O}_M$. Namely,
if we let 
\begin{equation}\label{derivations}
H_K=
\bigoplus_{\alpha=1}^n K \frac{\partial}{\partial t_{\alpha}}\oplus
\bigoplus_{i=1}^m K q_{i}\frac{\partial}{\partial q_i}~,
\end{equation}
then we have $\mathcal{T}_{M^{\dagger}}=\mathcal{O}_M \otimes_K H_K$. 
Define a flat connection $\nabla$ on $\mathcal{T}_{M^{\dagger}}$ by 
$\nabla=d\otimes 1_{H_K}$. 
The Lie bracket $[~,~]$ satisfies
\begin{equation}
[x,y]=\nabla_x\, y-\nabla_y\, x~\quad (x,y\in \mathcal{T}_{M^{\dagger}})~.
\end{equation}

\subsection{Formal mixed Frobenius structure}
We keep the notations in \S \ref{sec:notation-formal}. 
\begin{definition}\label{def:formal-saito}
A formal Saito structure on $M^{\dagger}$ consists of
\begin{itemize}
\item an $\mathcal{O}_M$-bilinear
multiplication $\circ$ 
on $\mathcal{T}_{M^{\dagger}}$ 
and
\item an element $E\in \mathcal{T}_{M^{\dagger}}$
\end{itemize}
satisfying the following conditions.\\
(i) The multiplication $\circ$ is compatible with $\nabla$ in
the sense that
\begin{equation}\label{fmfs1}
\nabla_x(y\circ z)=\nabla_y (x\circ z)\quad (x,y,z\in H_K)~,
\end{equation}
and the unit element $e$ satisfies $\nabla e=0$, 
i.e. $e\in H_K$.\\
(ii) The element $E$ satisfies $\nabla_x\nabla_y \,E=0$\footnote{
If we write
$E=\sum_{\alpha=1}^n E_{\alpha}\partial_{t_{\alpha}}
+\sum_{i=1}^m E_{i}q_{i}\partial_{q_{i}}
$,
the condition $\nabla\nabla E=0$ implies that
$E_{\alpha}$ $(1\leq \alpha\leq n)$,
and $E_i$ $(1\leq i\leq m)$ are linear polynomials in $t$
and independent of $q$.
} for $x,y\in H_K$ and
\begin{equation}
[E,x\circ y]-x\circ [E,y]-[E,x]\circ y=x\circ y~ \quad (x,y\in \mathcal{T}_{M^{\dagger}})~.
\end{equation}
\end{definition}
If $(\circ, E)$ is a formal Saito structure on $M^{\dagger}$, then
as in Lemma \ref{lemma:vector-potential},
there exists $\mathcal{G}\in K[\log q_1,\ldots,\log q_m]\otimes_K
\mathcal{T}_{M^{\dagger}}$ satisfying eq.\eqref{eq:vector-potential}
for $x,y\in H_K$.

\begin{definition}
A formal mixed Frobenius structure on $M^{\dagger}$ consists of
\begin{itemize}
\item a formal Saito structure $(\circ, E)$ on $M^{\dagger}$, and
\item a nondegenerate filtration $(I_{\bullet},g_{\bullet})$
on $H_K$,
\end{itemize}
satisfying the following conditions.\\
(i) $\mathcal{I}_{k}
=\mathcal{O}_M \otimes_K I_k$ is an ideal
and $g_k$ extended $\mathcal{O}_M$-bilinearly to $\mathcal{I}_{k}$
is $\circ$-invariant, i.e.
\begin{equation}
g_k(x\circ y,z)=g_k(y,x\circ z)\quad (x\in 
\mathcal{T}_{M^{\dagger}},~
y,z\in \mathcal{I}_{k}
)~.
\end{equation}
(ii) $\mathcal{I}_{k}$  
is preserved by $[E,-]$, i.e.
$[E, x]\in \mathcal{I}_{k} 
$ ($x\in \mathcal{I}_{k}
$)
and
there exists a collection of numbers 
$\{D_k\in K\mid  k\in \mathbb{Z}, I_k/I_{k-1}\neq 0\}$,
called {\it charges}, 
satisfying
\begin{equation}\label{Eg}
Eg_k(\bar{x},\bar{y})-g_k(\overline{[E,x]},\bar{y})-g_k(\bar{x},\overline{[E,y]})=(2-D_k)g_k(\bar{x},\bar{y})
\quad (x,y\in \mathcal{I}_{k} 
)~.
\end{equation}
Here $x\mapsto\bar{x}$ denotes the projection 
$\mathcal{I}_{k} \to \mathcal{I}_{k}/ \mathcal{I}_{k-1}$.
\end{definition}

\begin{remark}[on the convergent case]\label{remark-convergence}
Let $(\circ, E)$ (resp.$(\circ,E,I_{\bullet},g_{\bullet})$) 
be a formal Saito structure (resp. a formal MFS)
on $M^{\dagger}$
and let $C_{\alpha\beta}^{\gamma}\in \mathcal{O}_M$
($1\leq \alpha,\beta,\gamma\leq n+m$)
denote the structure constants of  $\circ$
with respect to the basis $(x_1,\ldots,x_{n+m})=(\partial_{t_{1}},\ldots,\partial_{t_n}, q_1\partial_{q_1},\ldots,q_m\partial_{q_m})$.
If there exists an open neighborhood $U'$ of 
$0\in K^{n+m}=\mathrm{Spec}\,K[t,q]$ where all $C_{\alpha\beta}^{\gamma}$
converge,
then 
$(\nabla, \circ, E)$ (resp. $(\nabla,\circ, E,I_{\bullet},g_{\bullet})$)
is a Saito structure (resp. a MFS) on 
$U=U'\cap \{q_1\cdots q_m\neq 0\}$
with local flat coordinates $t_1,\ldots,t_n,\log q_1,\ldots,\log q_m$.
In the case when the filtration $I_{\bullet}$ is trivial,
then $(U,\circ, E,e,g_{\bullet})$ is a Frobenius manifold 
with logarithmic poles along the divisor $\{q_1\cdots q_m=0\}$
(see \cite{Reichelt} for the definition). 
\end{remark}

\subsection{Localized formal Frobenius structure over $K[\lambda]$}
\label{sec:localized-formal}
We still keep the notations in \S \ref{sec:notation-formal}
and use superscripts $\lambda$ for those
tensored with $K[\lambda]$: 
$\mathcal{O}_M^{\lambda}:=\mathcal{O}_M \otimes_K K[\lambda]$, 
$H^{\lambda}_K= H_K\otimes_K K[\lambda]$, 
and $\mathcal{T}_{M^{\dagger}}^{\lambda}:=\mathcal{T}_{M^{\dagger}}\otimes_K K[\lambda]=\mathcal{O}_M \otimes_{K}H_K^{\lambda}$.
We have 
a flat connection $\nabla$ on $\mathcal{T}_{M^{\dagger}}^{\lambda}$ 
defined by the $K[\lambda]$-linear extension
of that introduced in \S \ref{sec:notation-formal}. 

\begin{definition}\label{def:loc-formal}
A localized formal Frobenius structure over $K[\lambda]$
on $M^{\dagger}$ consists of 
\begin{itemize}
\item an $\mathcal{O}_M^{\lambda}$-bilinear  multiplication $\ast$
on $\mathcal{T}_{M^{\dagger}}^{\lambda}$, 
\item 
an element $\bm{E}\in \mathcal{T}_{M^{\dagger}}^{\lambda}$, and
\item a localized $K[\lambda]$-metric $g^{\lambda}$ on $H^{\lambda}_K$ 
\end{itemize}
satisfying the following conditions.\\
(i) The multiplication $\ast$ is compatible with $\nabla$ in the sense that
\begin{equation}\label{comp2}
\nabla_{\bm{x}}(\bm{y}\ast \bm{z})=
\nabla_{\bm{y}} (\bm{x}\ast \bm{z})
\quad (\bm{x},\bm{y},\bm{z}\in H^{\lambda}_K)~,
\end{equation}
and the unit $\bm{e}$ satisfies $\nabla \bm{e}=0$,  i.e. $\bm{e}\in H^{\lambda}_K$.
\\
(ii) The element $\bm{E}$ satisfies 
$\nabla_{\bm{x}}\nabla_{\bm{y}}\, \bm{E}=0$ for
$\bm{x},\bm{y}\in H^{\lambda}_K$ and
\begin{equation}
\label{EF1}
[\bm{E}^{\lambda},\bm{x}\ast \bm{y}]-
\bm{x}\ast[\bm{E}^{\lambda},\bm{y}]-[\bm{E}^{\lambda},\bm{x}]\ast\bm{y}=\bm{x}\ast \bm{y}
\quad(\bm{x},\bm{y}\in \mathcal{T}_{M^{\dagger}}^{\lambda})~,
\end{equation}
where $\bm{E}^{\lambda}
:=\bm{E}+\lambda\frac{\partial}{\partial \lambda}$.
\\
(iii)
$g^{\lambda}$, extended $\mathcal{O}_M^{\lambda}$-bilinearly to
$\mathcal{T}_{M^{\dagger}}^{\lambda}$, is $\ast$-invariant.
\\
(iv)
There exists $D\in K$ (called a charge) satisfying
\begin{equation}\label{EF2}
\bm{E}^{\lambda}g^{\lambda}(\bm{x},\bm{y})-
g^{\lambda}([\bm{E}^{\lambda},\bm{x}],\bm{y})-
g^{\lambda}(\bm{x},[\bm{E}^{\lambda},\bm{y}])=
(2-D)g^{\lambda}(\bm{x},\bm{y})
\quad (\bm{x},\bm{y}\in \mathcal{T}_{M^{\dagger}}^{\lambda})~.
\end{equation}
\end{definition}

\begin{proposition}\label{construction2}
Let
$(\ast,\bm{E},g^{\lambda})$ be a localized formal Frobenius 
structure over $K[\lambda]$
of charge $D$ on $M^{\dagger}$.
Let $\circ$ be 
the multiplication on $\mathcal{T}_{M^{\dagger}}$ 
induced by $\pi:\mathcal{T}_{M^{\dagger}}^{\lambda}\to \mathcal{T}_{M^{\dagger}}
=\mathcal{T}_{M^{\dagger}}^{\lambda}/\lambda \mathcal{T}_{M^{\dagger}}^{\lambda}$,
$E=\pi(\bm{E})$,
and let $(I_{\bullet},g_{\bullet})$ be 
the nondegenerate filtration
on $H_K$ induced from the localized $K[\lambda]$-metric
$g^{\lambda}$(see Lemma \ref{induced-nondegenerate-filtration}).
Then $(\circ,E,I_{\bullet},g_{\bullet})$
is a formal MFS  on $M^{\dagger}$ of charges $\{D_k=D-k\}$.
\end{proposition}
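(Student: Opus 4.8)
The plan is to verify the axioms of a formal MFS (Definition of formal mixed Frobenius structure) one at a time for the triple $(\circ,E,I_\bullet,g_\bullet)$, reducing each to the corresponding axiom of the localized formal Frobenius structure $(\ast,\bm E,g^\lambda)$ together with the algebraic results of \S\ref{section:MFA2}. First I would observe that since $\ast$ is $\mathcal O_M^\lambda$-bilinear and $g^\lambda$ is $\ast$-invariant, the reduction $\circ$ modulo $\lambda$ is a well-defined associative commutative $\mathcal O_M$-bilinear multiplication with unit $e=\pi(\bm e)\in H_K$, and eq.\eqref{fmfs1} is the reduction mod $\lambda$ of eq.\eqref{comp2}, which holds because $\nabla=d\otimes 1$ commutes with $\pi$ (indeed $\pi$ is $K[\lambda]$-linear and kills $\lambda\mathcal T_{M^\dagger}^\lambda$, which is $\nabla$-stable). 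So the formal Saito axiom (i) reduces immediately.

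For axiom (ii) of the formal Saito structure, I would note that $\bm E^\lambda = \bm E + \lambda\partial_\lambda$ reduces modulo $\lambda$ to $E=\pi(\bm E)$ acting on $\mathcal T_{M^\dagger}$: the operator $\lambda\partial_\lambda$ annihilates $H_K\subset H_K^\lambda$ and maps $\lambda\mathcal T_{M^\dagger}^\lambda$ into itself, so $[\bm E^\lambda,-]$ descends to $[E,-]$ on the quotient. Hence $\nabla_x\nabla_y E=0$ for $x,y\in H_K$ follows from $\nabla_{\bm x}\nabla_{\bm y}\bm E=0$, and eq.\eqref{E1} (in its formal version) is the mod-$\lambda$ reduction of eq.\eqref{EF1}. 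This establishes that $(\circ,E)$ is a formal Saito structure on $M^\dagger$.

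Next, the filtration data: by Lemma~\ref{induced-nondegenerate-filtration} the localized $K[\lambda]$-metric $g^\lambda$ induces a nondegenerate filtration $(I_\bullet,g_\bullet)$ on $H_K$, and by Theorem~\ref{thm-MFA} applied fiberwise (over the local ring $\mathcal O_M$, or equivalently working with the $K[\lambda]$-algebra structure $\ast$ restricted to the free $K[\lambda]$-module $H_K^\lambda$ after extending scalars) the $\ast$-invariance of $g^\lambda$ forces each $I_k^\lambda$ to be an ideal, hence $\mathcal I_k=\mathcal O_M\otimes_K I_k$ is an ideal of $\mathcal T_{M^\dagger}$ and each $g_k$ is $\circ$-invariant; this is axiom (i) of the formal MFS. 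The one subtlety here is that $\ast$ is $\mathcal O_M^\lambda$-bilinear rather than $K[\lambda]$-bilinear, so I would point out that the ideal property and the invariance of $g_k$ are pointwise/$\mathcal O_M$-linear statements and follow from the purely $K[\lambda]$-linear statement after noting that the structure constants $C_{\alpha\beta}^\gamma$ lie in $\mathcal O_M^\lambda$ and the argument of Theorem~\ref{thm-MFA} only uses the module structure of $H_K^\lambda$ over $K[\lambda]$ and the form $g^\lambda$ valued in $K[\lambda,\lambda^{-1}]$.

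Finally, for axiom (ii) of the formal MFS I need the charges. Here the main obstacle — and the only place where the shift $D_k=D-k$ is produced — is a careful bookkeeping of how $\bm E^\lambda$ interacts with the residue formula \eqref{gk} defining $g_k$. I would argue as follows: eq.\eqref{EF2} says $g^\lambda$ is an eigen-object of the operator $\bm x,\bm y\mapsto \bm E^\lambda g^\lambda(\bm x,\bm y)-g^\lambda([\bm E^\lambda,\bm x],\bm y)-g^\lambda(\bm x,[\bm E^\lambda,\bm y])$ with eigenvalue $2-D$; I want to deduce the analogous statement for $g_k(\bar x,\bar y)=\mathrm{Res}_{\lambda=0}\lambda^{k-1}g^\lambda(\bm x,\bm y)$. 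Writing $E^\lambda=E+\lambda\partial_\lambda$ and applying the $E$-part: since $[E,-]$ on $\mathcal T_{M^\dagger}$ is the reduction of $[\bm E^\lambda,-]$, and since $\bm E^\lambda$ preserves each $I_k^\lambda$ (this follows because $[\bm E^\lambda,-]$ preserves ideals of the right "weight" — I would verify that $\bm E^\lambda I_k^\lambda\subseteq I_k^\lambda$ using the characterization \eqref{def-iklambda} together with \eqref{EF2}, which controls how $g^\lambda$ on $I_k^\lambda$ scales in $\lambda$), the $E$-terms reproduce the left side of \eqref{Eg}. The extra $\lambda\partial_\lambda$ acting through $\lambda^{k-1}$ inside the residue contributes exactly a factor $k-1$ — because $\lambda\partial_\lambda(\lambda^{k-1}f)=\lambda^{k-1}(\lambda\partial_\lambda f)+(k-1)\lambda^{k-1}f$ and $\mathrm{Res}_{\lambda=0}\lambda\partial_\lambda(\lambda^{k-1}f)$ of the exact-derivative part vanishes — so passing the operator through $\mathrm{Res}_{\lambda=0}\lambda^{k-1}(-)$ turns the eigenvalue $2-D$ into $2-D-(k-1)+ \text{(a correction)}$; I would track the signs and the contribution of $\lambda\partial_\lambda$ acting on $\bm x,\bm y$ themselves (which can be absorbed by choosing $\lambda$-independent lifts in $H_K$, legitimate since lifts are arbitrary by Lemma~\ref{g1}) to land on precisely $2-D_k$ with $D_k=D-k$. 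This residue-and-weight computation is the technical heart; everything else is a mechanical descent mod $\lambda$.
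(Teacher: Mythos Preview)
Your overall architecture is exactly the paper's: reduce the Saito axioms mod $\lambda$, use the $\ast$-invariance of $g^\lambda$ to get the ideal and invariance properties of $(I_\bullet,g_\bullet)$, and extract the charges from \eqref{EF2} via the residue formula \eqref{gk}. The first three parts are fine.

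The charge computation, however, contains a genuine bookkeeping error. Your identity $\lambda\partial_\lambda(\lambda^{k-1}f)=(k-1)\lambda^{k-1}f+\lambda^{k-1}(\lambda\partial_\lambda f)$ is correct, but the claim that $\mathrm{Res}_{\lambda=0}\,\lambda\partial_\lambda(\,\cdot\,)$ vanishes is false: for $g=\lambda^{-1}$ one has $\lambda\partial_\lambda g=-\lambda^{-1}$, whose residue is $-1$. What does vanish is $\mathrm{Res}_{\lambda=0}\,\partial_\lambda(\,\cdot\,)$. Using $\partial_\lambda(\lambda^{k}f)=k\lambda^{k-1}f+\lambda^{k-1}(\lambda\partial_\lambda f)$ instead gives the correct relation $\mathrm{Res}_{\lambda=0}\lambda^{k-1}(\lambda\partial_\lambda f)=-k\,\mathrm{Res}_{\lambda=0}\lambda^{k-1}f$, and hence
\[
Eg_k(\bar x,\bar y)=\mathrm{Res}_{\lambda=0}\lambda^{k-1}(\bm E^\lambda-\lambda\partial_\lambda)g^\lambda(\bm x,\bm y)
=\mathrm{Res}_{\lambda=0}\lambda^{k-1}(k+\bm E^\lambda)g^\lambda(\bm x,\bm y),
\]
which after \eqref{EF2} yields $2-D_k=2-D+k$, i.e.\ $D_k=D-k$. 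Your version produces $k-1$ and hence $D_k=D-k+1$; the ``correction'' you propose to fix this does not exist. Indeed, $[\lambda\partial_\lambda,\bm x]\in\lambda\,\mathcal T_{M^\dagger}^\lambda$ for any $\bm x\in\mathcal T_{M^\dagger}^\lambda$ (since $\lambda\partial_\lambda$ kills the constant term in $\lambda$), so $[\bm E^\lambda,\bm x]$ is already a lift of $[E,x]$ in $\mathcal I_k^\lambda$ once you know $[\bm E^\lambda,-]$ preserves $\mathcal I_k^\lambda$. There is no extra contribution from $\lambda\partial_\lambda$ acting on the arguments. Incidentally, your fallback of choosing $\lambda$-independent lifts is not always available: in general $I_k\not\subset I_k^\lambda$ as subsets of $H_K^\lambda$ (take e.g.\ $H_K=Ke_1\oplus Ke_2$ with $g^\lambda(e_1,e_1)=0$, $g^\lambda(e_1,e_2)=\lambda^{-1}$, $g^\lambda(e_2,e_2)=\lambda^{-2}$; then $e_1\in I_0$ but $e_1\notin I_0^\lambda$). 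The paper's argument sidesteps this entirely.
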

\begin{proof}
First,  the conditions (i) and (ii) of Definition \ref{def:formal-saito} 
follow from the conditions (i) and (ii) 
in Definition \ref{def:loc-formal} respectively.

The $\ast$-invariance of $g^{\lambda}$ implies that
$\mathcal{O}_M\otimes I_k^{\lambda}=:\mathcal{I}_{k}^{\lambda}\subset \mathcal{T}_{M^{\dagger}}^{\lambda}$ 
is an ideal
with respect to $\ast$, which in turn implies that
$\mathcal{I}_{k}$ is an ideal with respect to $\circ$.
It also implies 
the $\circ$-invariance of the metrics $g_{\bullet}$.

Eq.\eqref{EF2} implies that  the Lie bracket 
$[\bm{E}^{\lambda},-]$ preserves
$\mathcal{I}_{k}^{\lambda}$. From this it follows that $[E,-]$ preserves $\mathcal{I}_{k}$.
Eq.\eqref{EF2}  also implies the condition \eqref{Eg} as follows. 
For $x,y\in \mathcal{I}_{k}$, we have
\begin{equation}\nonumber
\begin{split}
Eg_k(\bar{x},\bar{y})&=\mathrm{Res}_{\lambda=0}\lambda^{k-1}
(k+\bm{E}^{\lambda})g^{\lambda}(\bm{x},\bm{y})
\\
&\stackrel{\eqref{EF2}}{=}k g_k(\bar{x},\bar{y})+
\mathrm{Res}_{\lambda=0}\lambda^{k-1}
\{g^{\lambda}([\bm{E}^{\lambda},\bm{x}],\bm{y})
+g^{\lambda}(\bm{x},[\bm{E}^{\lambda},\bm{y}])
+(2-D)g^{\lambda}(\bm{x},\bm{y})
\}
\\
&=(2-D+k)g_k(\bar{x},\bar{y})
+g_k([E,x],y)+g_k(x,[E,y])~,
\end{split}
\end{equation}
where $\bm{x},\bm{y}\in \mathcal{I}_{k}^{\lambda}$ are lifts of $x,y$.
\end{proof}

\section{Local quantum cohomology} 
In this section,  $K$ denotes either $\mathbb{R}$ or $\mathbb{C}$.
\subsection{Notations} 
Let $X$ be a smooth complex projective variety. 
Let $\mathcal{V}\to X$ be a  concave\footnote{A 
vector bundle $\mathcal{V}$ is concave if $H^0(C,f^*\mathcal{V})=0$
for any genus zero stable map $(f,C)$ to $X$ of non-zero degree.} 
vector bundle of rank $r$. 
Let $S^1={\mathrm U} (1)$ act on $\mathcal{V}$ by the scalar multiplication on the fiber.
The generator of the $S^1$-equivariant cohomology of 
a point is denoted $\lambda$.

Let $H_K:=H^{\mathrm{even}}(X,K)$.
We fix a basis $\{\phi_{1},\ldots,\phi_p\}$
of $H^2(X,\mathbb{Z})$  satisfying the condition
that $\int_C \phi_i\geq 0$ for any curve $C\subset X$.\footnote{
The existence of such a basis follows from the fact that 
the Mori cone $\overline{NE}_{\mathbb{R}}(X)$ 
of a smooth projective variety $X$
does not contain a straight line (see e.g.\cite[Corollary 1.19]{KollarMori}).
If $\sigma$ denotes the image of $\overline{NE}_{\mathbb{R}}(X)$
in $H_2(X,\mathbb{R})$,
the dual cone $\sigma^{\vee}
=\{x\in H^2(X,\mathbb{R})\mid
\langle x,y\rangle\geq 0,~y\in \sigma\}$ is of maximal dimension.
Therefore there exists an integral basis $\phi_1,\ldots\phi_p$ of   $H^2(X,\mathbb{R})$ such that  $\phi_i\in \sigma^{\vee}$.}
We also fix a homogeneous basis 
$\{\phi_0=1,\phi_1,\ldots,\phi_p,\phi_{p+1},\ldots,\phi_s\}$
of $H_K$.

Let $t_0,\ldots,t_s$ be the coordinates on $H_K$ associated to the basis.
We set $R=K[[t,q]]$ where $t=(t_0,t_{p+1},\ldots,t_s)$
and $q=(q_1,\ldots,q_p)$ with $q_i=e^{t_i}$.
As in \S \ref{sec:notation-formal}, 
we consider the formal scheme $M={\rm Spf}\, R$
with a fixed logarithmic structure defined by the monoid \eqref{eq:monoid}
and denote it by $M^{\dagger}$. 
We identify $H_K$ with the linear space
of derivations on $R$ defined in eq.\eqref{derivations} by 
\begin{equation}
\begin{cases}
\phi_{\alpha} \mapsto 
\frac{\partial}{\partial t_{\alpha}}&(\alpha=0,~p+1,\cdots, s)\\
\phi_i \mapsto q_i\frac{\partial}{\partial q_i}&(1\leq i\leq p)
\end{cases}.
\end{equation}
Hence $\mathcal{T}_{M^{\dagger}}=\mathcal{O}_M \otimes_K H_K$.
The same notations 
$\mathcal{O}_M^{\lambda}$, 
$H^{\lambda}_K$ and
$\mathcal{T}_{M^{\dagger}}^{\lambda}$
as in \S\ref{sec:localized-formal} will be used. 

We put the grading on the vector space $H_K$ by setting
$
|\phi|=k
$ if $\phi\in H^{2k}(X,K)$.
We also put the gradings on the rings $\mathcal{O}_M$ and $\mathcal{O}_M^{\lambda}$ by
$
|t_\alpha|=1-|\phi_{\alpha}| $ ($\alpha=0,p+1,\cdots, s$),
$|\lambda|=1$ and 
$|q_i|=\xi_i$, where $\xi_i$ are defined by 
\begin{equation}
c_1(X)+c_1(\mathcal{V})=\sum_{i=1}^p \xi_i \phi_i~.
\end{equation}
Then we have the induced gradings on
$\mathcal{T}_{M^{\dagger}}$ and $\mathcal{T}^{\lambda}_{M^{\dagger}}$.

Let 
\begin{equation}\label{Euler}
\bm{E}=\sum_{\alpha=0}^s (1-|\phi_{\alpha}|)
\,t_{\alpha}\frac{\partial}{\partial t_{\alpha}}+
\sum_{i=1}^p \xi_i q_i\frac{\partial}{\partial q_i}~,\quad
\bm{E}^{\lambda}=\bm{E}+\lambda\frac{\partial}{\partial\lambda}
~.\end{equation}
Then, for a homogeneous $f\in \mathcal{O}_{M}^{\lambda}$
and $\bm{x}\in \mathcal{T}_{M^{\dagger}}^{\lambda}$, 
we have
\begin{equation}\label{grading-Euler}
\bm{E}^{\lambda} f=|f|f~,\quad 
[\bm{E}^{\lambda},\bm{x}]=(|\bm{x}|-1)\bm{x}~.
\end{equation}
\subsection{Localized formal Frobenius structure over $K[\lambda]$}
The following material can be found in \cite{Givental}.
Let $g^{\lambda}$ be a localized $K[\lambda]$-metric on $H_K^{\lambda}$
defined by
\begin{equation}
g^{\lambda}(\phi,\varphi)=
\int_X \phi\cup\varphi\cup\frac{1}{e_{S^1}(\mathcal{V})}
\end{equation}
where $e_{S^1}(\mathcal{V})$ is the $S^1$-equivariant Euler class of $\mathcal{V}$:
$$e_{S^1}(\mathcal{V})=\lambda^r+c_1(\mathcal{V})\lambda^{r-1}+\cdots + c_r(\mathcal{V}).$$
\begin{lemma}
 $g^{\lambda}$ and $\bm{E}$ (in eq.\eqref{Euler})
satisfy eq.\eqref{EF2} with $D=\dim_{\mathbb{C}} X+r$.
\end{lemma}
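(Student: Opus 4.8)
The plan is to verify equation \eqref{EF2} directly by exploiting the homogeneity relations \eqref{grading-Euler} together with the explicit form of $g^{\lambda}$. First I would observe that $g^{\lambda}(\phi,\varphi)=\int_X \phi\cup\varphi\cup e_{S^1}(\mathcal{V})^{-1}$ is a homogeneous element of $K[\lambda,\lambda^{-1}]$ whenever $\phi$ and $\varphi$ are homogeneous: indeed $|e_{S^1}(\mathcal{V})|=r$ (each Chern class $c_j(\mathcal{V})$ has cohomological degree $j$ and $\lambda$ has degree $1$), the integration $\int_X$ drops degree by $\dim_{\mathbb{C}}X$, and $\phi\cup\varphi$ has degree $|\phi|+|\varphi|$, so $|g^{\lambda}(\phi,\varphi)|=|\phi|+|\varphi|-\dim_{\mathbb{C}}X-r$ when the class is nonzero. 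Hence by the first relation in \eqref{grading-Euler}, applied to $f=g^{\lambda}(\phi,\varphi)\in\mathcal{O}_M^{\lambda}$ (which is actually in $K[\lambda,\lambda^{-1}]$),
\begin{equation}\nonumber
\bm{E}^{\lambda}g^{\lambda}(\phi,\varphi)=\bigl(|\phi|+|\varphi|-\dim_{\mathbb{C}}X-r\bigr)\,g^{\lambda}(\phi,\varphi).
\end{equation}

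Next I would reduce to the case of homogeneous basis vectors $\bm{x}=\phi$, $\bm{y}=\varphi$ in $H_K$ by $K[\lambda]$-bilinearity and the Leibniz-type behavior of $\bm{E}^{\lambda}$; it suffices to check \eqref{EF2} on such pairs since both sides are $K[\lambda]$-bilinear in $\bm{x},\bm{y}$ and $\bm{E}^{\lambda}$ acts as a derivation on products with $\mathcal{O}_M^{\lambda}$-coefficients (the $q$- and $t$-dependent coefficients contribute matching terms to the three terms on the left-hand side, so they cancel). For basis vectors, the second relation in \eqref{grading-Euler} gives $[\bm{E}^{\lambda},\phi]=(|\phi|-1)\phi$ and $[\bm{E}^{\lambda},\varphi]=(|\varphi|-1)\varphi$. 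Substituting into the left-hand side of \eqref{EF2}:
\begin{equation}\nonumber
\bigl(|\phi|+|\varphi|-\dim_{\mathbb{C}}X-r\bigr)g^{\lambda}(\phi,\varphi)-(|\phi|-1)g^{\lambda}(\phi,\varphi)-(|\varphi|-1)g^{\lambda}(\phi,\varphi)=\bigl(2-\dim_{\mathbb{C}}X-r\bigr)g^{\lambda}(\phi,\varphi),
\end{equation}
which is exactly $(2-D)g^{\lambda}(\phi,\varphi)$ with $D=\dim_{\mathbb{C}}X+r$.

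The one point that needs a little care, and which I expect to be the main (though minor) obstacle, is justifying the reduction from general $\bm{x},\bm{y}\in\mathcal{T}_{M^{\dagger}}^{\lambda}$ to homogeneous constant vectors: one must check that writing $\bm{x}=\sum f_a\,\phi_a$ with $f_a\in\mathcal{O}_M^{\lambda}$, the extra terms $\bm{E}^{\lambda}(f_a)\,g^{\lambda}(\phi_a,\phi_b)$ appearing in $\bm{E}^{\lambda}g^{\lambda}(\bm{x},\bm{y})$ are precisely matched by the corresponding terms in $g^{\lambda}([\bm{E}^{\lambda},\bm{x}],\bm{y})$ and $g^{\lambda}(\bm{x},[\bm{E}^{\lambda},\bm{y}])$ coming from the $\mathcal{O}_M^{\lambda}$-linear parts of the Lie bracket, i.e. $[\bm{E}^{\lambda},f\phi]=(\bm{E}^{\lambda}f)\phi+f[\bm{E}^{\lambda},\phi]$ as logarithmic vector fields. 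This is a routine Leibniz computation using that $g^{\lambda}$ is $\mathcal{O}_M^{\lambda}$-bilinear; once it is in place, the homogeneity computation above finishes the proof. Alternatively, since \eqref{EF2} is $\mathcal{O}_M^{\lambda}$-linear in an appropriate sense after accounting for the bracket, one may simply invoke \eqref{grading-Euler} termwise and observe all $f_a$-dependent contributions telescope.
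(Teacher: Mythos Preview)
Your proof is correct and follows essentially the same approach as the paper: the paper's proof simply records the homogeneity identity $g^{\lambda}(\phi_{\alpha},\phi_{\beta})=\eta_{\alpha\beta}\lambda^{|\phi_{\alpha}|+|\phi_{\beta}|-\dim_{\mathbb{C}}X-r}$ and then says that this together with \eqref{grading-Euler} implies the lemma, which is exactly the computation you spell out. Your extra discussion of the reduction to basis vectors via the Leibniz rule is a valid elaboration of a step the paper leaves implicit.
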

\begin{proof}
By the degree consideration, $g^{\lambda}$ satisfies
\begin{equation}\label{eq:glambda}
g^{\lambda}(\phi_{\alpha},\phi_{\beta})=\eta_{\alpha \beta} \lambda^{|\phi_{\alpha}|+|\phi_{\beta}|-\dim_{\mathbb{C}} X-r}\quad
(\eta_{\alpha \beta}\in K)~.
\end{equation}
This together with eq.\eqref{grading-Euler} implies the lemma.
\end{proof}

We define the multiplication on $\mathcal{T}_{M^{\dagger}}^{\lambda}$
as follows.
For $x_1,\ldots,x_m\in H_K$
and ${d}\in H_2(X,\mathbb{Z})$,
let 
\begin{equation}
\langle x_1,\ldots,x_m \rangle_{\mathcal{V},{d}}
=\int_{[\overline{M}_{0,m}(X,{d})]^{\mathrm{vir}}}
\prod_{i=1}^m ev_i^*x_i\cup {e}_{S^1}
(-R^{\bullet}\mu_*ev_{m+1}^*\mathcal{V}) ~\in K[\lambda]
\end{equation}
where $\overline{M}_{0,m}(X,{d})$ is 
the moduli stack of genus zero stable maps to $X$ of degree ${d}$
with $m$ marked points,
$ev_i: \overline{M}_{0,m}(X,{d})\to X$ is the evaluation map
at the $i$th marked point, and 
$\mu: \overline{M}_{0,m+1}(X,{d})\to \overline{M}_{0,m}(X,{d})$
is the forgetful map. 
We define the multiplication $\ast_{\mathcal{V}}$ on $\mathcal{T}_{M^{\dagger}}^{\lambda}$ by
\begin{equation}\label{def-ast}
\begin{split}
g^{\lambda}(\bm{x}\ast_{\mathcal{V}} \bm{y},\bm{z})&=
\sum_{{d}}\sum_{m\geq 0}\frac{1}{m!}
\langle \bm{x},\bm{y},\bm{z},\underbrace{\tau, \ldots,\tau}_{m} 
 \rangle_{\mathcal{V},{d}} \quad 
 (\bm{x},\bm{y},\bm{z}\in \mathcal{T}_{M^{\dagger}}^{\lambda})
 \\
 &=\sum_{{d}}
 \sum_{m\geq 0}\frac{1}{m!}
 \langle \bm{x},\bm{y},\bm{z},
 \underbrace{\tau_{\ge 4}, \ldots,\tau_{\ge 4}}_{m} 
 \rangle_{\mathcal{V},{d}} \,q^{{d}}~.
\end{split}
\end{equation}
In the first line, $\tau=\sum_{\alpha=0}^s t_{\alpha}\phi_{\alpha}$
and in the second line,
$\tau_{\ge 4}=\sum_{\alpha=p+1}^s t_{\alpha}\phi_{\alpha}$ and
$q^{{d}}=e^{\int_{{d}} t_1\phi_1+\cdots+t_p \phi_p}$.
In passing to the second line, the fundamental class axiom and
the divisor axiom of Gromov--Witten theory 
(see, e.g., \cite[III, \S 5]{Manin}) are used.

\begin{lemma}
$(\mathcal{T}_{M^{\dagger}}^{\lambda},\ast_{\mathcal{V}})$ is a graded ring.
Hence the multiplication $\ast_{\mathcal{V}}$ and 
$\bm{E}$ in eq.\eqref{Euler} satisfy eq.\eqref{EF1}.
\end{lemma}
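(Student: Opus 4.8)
The plan is to verify that $\ast_{\mathcal{V}}$ makes $\mathcal{T}_{M^{\dagger}}^{\lambda}$ into a \emph{graded} ring, and then read off eq.\eqref{EF1} as a formal consequence of the grading together with eq.\eqref{grading-Euler}. So the proof splits into two parts: first the ring axioms (associativity, commutativity, unit, $\mathcal{O}_M^{\lambda}$-bilinearity), and second the homogeneity of the structure constants.

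For the ring structure: commutativity is immediate from the symmetry of the Gromov--Witten bracket $\langle x_1,\ldots,x_m\rangle_{\mathcal{V},d}$ under permutation of the insertions, and $\mathcal{O}_M^{\lambda}$-bilinearity is built into the definition \eqref{def-ast}. The unit is $\phi_0 = 1$, which follows from the fundamental class axiom together with the nondegeneracy of $g^{\lambda}$ (which holds since $g^{\lambda}$ is a localized $K[\lambda]$-metric). Associativity is the WDVV equation for the twisted theory; I would cite \cite{Givental} for this, since it is precisely the content referred to in ``The following material can be found in \cite{Givental}.'' The main point I actually need to establish is the grading claim.

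For homogeneity: I would compute the degree of $\langle \phi_{\alpha},\phi_{\beta},\phi_{\gamma}, \tau_{\ge 4},\ldots,\tau_{\ge 4}\rangle_{\mathcal{V},d}\, q^d$ appearing in \eqref{def-ast}. The virtual dimension of $\overline{M}_{0,m+3}(X,d)$ is $\dim_{\mathbb{C}} X + \int_d c_1(X) + m$, while the twisting class ${e}_{S^1}(-R^{\bullet}\mu_*ev^*\mathcal{V})$ has degree $-(\int_d c_1(\mathcal{V}) + r)$ in the sense that $R^{\bullet}\mu_*ev^*\mathcal{V}$ has rank $\int_d c_1(\mathcal{V}) + r$ (using concavity so $R^0\mu_* = 0$). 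Matching the cohomological degree of the integrand against the virtual dimension, the $\lambda$-degree of the bracket is $|\phi_{\alpha}|+|\phi_{\beta}|+|\phi_{\gamma}| - \dim_{\mathbb{C}}X - \int_d c_1(X) - (m+\text{contributions from the }\tau_{\ge 4}) + \bigl(\int_d c_1(\mathcal{V})+r\bigr)$, and since $|q^d| = \int_d(c_1(X)+c_1(\mathcal{V}))$ and $|t_{\alpha}| = 1-|\phi_{\alpha}|$ the total degree of each term on the right of \eqref{def-ast} comes out to $|\phi_{\alpha}|+|\phi_{\beta}|+|\phi_{\gamma}| + |\phi_{\alpha}|+|\phi_{\beta}|+|\phi_{\gamma}| - \dim_{\mathbb{C}}X - r$, wait—more carefully, it should match $|\phi_{\alpha}| + |\phi_{\beta}| + |\phi_{\gamma}| - \dim_{\mathbb{C}}X - r$, which is exactly the degree of $g^{\lambda}(\phi_{\alpha}\ast_{\mathcal{V}}\phi_{\beta},\phi_{\gamma})$ dictated by \eqref{eq:glambda} if $\phi_{\alpha}\ast_{\mathcal{V}}\phi_{\beta}$ is to be homogeneous of degree $|\phi_{\alpha}|+|\phi_{\beta}|$. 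So the book-keeping shows $\ast_{\mathcal{V}}: H^{\lambda}_K \times H^{\lambda}_K \to \mathcal{T}^{\lambda}_{M^{\dagger}}$ raises degree additively, i.e. $|\bm{x}\ast_{\mathcal{V}}\bm{y}| = |\bm{x}| + |\bm{y}|$, making $(\mathcal{T}^{\lambda}_{M^{\dagger}},\ast_{\mathcal{V}})$ graded.

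Finally, eq.\eqref{EF1} follows formally: for homogeneous $\bm{x},\bm{y}$, the Leibniz-type identity $[\bm{E}^{\lambda}, \bm{x}\ast_{\mathcal{V}}\bm{y}] - \bm{x}\ast_{\mathcal{V}}[\bm{E}^{\lambda},\bm{y}] - [\bm{E}^{\lambda},\bm{x}]\ast_{\mathcal{V}}\bm{y}$ evaluates via \eqref{grading-Euler} to $\bigl((|\bm{x}\ast_{\mathcal{V}}\bm{y}| - 1) - (|\bm{y}|-1) - (|\bm{x}|-1)\bigr)\bm{x}\ast_{\mathcal{V}}\bm{y} = (|\bm{x}|+|\bm{y}| - 1 - |\bm{y}| - |\bm{x}| + 2)\bm{x}\ast_{\mathcal{V}}\bm{y} = \bm{x}\ast_{\mathcal{V}}\bm{y}$, and this extends to all of $\mathcal{T}^{\lambda}_{M^{\dagger}}$ by $\mathcal{O}_M^{\lambda}$-bilinearity and the first relation in \eqref{grading-Euler}. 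The one subtle point to get right is the dimension count for the twisting class under concavity—that $R^{\bullet}\mu_* ev_{m+1}^*\mathcal{V}$ is a genuine vector bundle (not just a complex) of the claimed rank—and carefully tracking how the divisor/fundamental-class reduction in \eqref{def-ast} interacts with the grading on the $t_{\alpha}$ and on $q^d$; everything else is formal.
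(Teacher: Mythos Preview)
Your proposal is correct and is precisely what the paper's one-line proof (``The lemma follows from the degree axiom of Gromov--Witten theory'') means when unpacked: you verify the standard ring axioms (citing \cite{Givental} for WDVV, as the paper does implicitly), carry out the virtual-dimension/twisting-rank bookkeeping that constitutes the degree axiom to see $|\bm{x}\ast_{\mathcal{V}}\bm{y}|=|\bm{x}|+|\bm{y}|$, and then read off \eqref{EF1} from \eqref{grading-Euler}. The only cosmetic issue is the ``wait---more carefully'' passage in your dimension count, which you should clean up; the computation itself lands on the right answer.
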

\begin{proof}
The lemma follows from the degree axiom of Gromov--Witten theory.
\end{proof}

\begin{proposition}\label{local1}
$(g^{\lambda},\ast_{\mathcal{V}}, \bm{E})$ is a localized formal Frobenius structure over 
$K[\lambda]$ of charge $\dim_{\mathbb{C}} X+r$ on $M^{\dagger}$.
\end{proposition}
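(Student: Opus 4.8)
The plan is to verify the four defining conditions of Definition \ref{def:loc-formal} for the triple $(\ast_{\mathcal{V}},\bm{E},g^{\lambda})$, drawing on the three lemmas just established. Conditions (i) and (iii) are the content of the first line of eq.\eqref{def-ast} together with the WDVV-type and invariance structure of twisted Gromov--Witten theory, condition (ii) is the preceding lemma on $(\mathcal{T}_{M^{\dagger}}^{\lambda},\ast_{\mathcal{V}})$ being a graded ring, and condition (iv) is the lemma computing $g^{\lambda}$ and $\bm{E}$ via eq.\eqref{EF2} with $D=\dim_{\mathbb{C}}X+r$. So the proposition is essentially an assembly of parts already in place; what remains to be argued from scratch is the compatibility of $\ast_{\mathcal{V}}$ with $\nabla$ (associativity and the flatness-type condition \eqref{comp2}), the existence of the flat unit, and that $g^{\lambda}$ is $\ast_{\mathcal{V}}$-invariant.

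First I would record that $g^{\lambda}$ is a localized $K[\lambda]$-metric: eq.\eqref{eq:glambda} shows its Gram matrix in the basis $\{\phi_\alpha\}$ is diagonal with monomial entries $\eta_{\alpha\beta}\lambda^{|\phi_\alpha|+|\phi_\beta|-\dim_{\mathbb{C}}X-r}$, and since ordinary Poincar\'e duality on $X$ makes $(\eta_{\alpha\beta})$ nondegenerate over $K$, the matrix is unimodular over $K[\lambda,\lambda^{-1}]$. Next, for condition (i), the $\mathcal{O}_M^{\lambda}$-bilinearity of $\ast_{\mathcal{V}}$ is built into the definition; commutativity follows from the $\mathfrak{S}_3$-symmetry of the correlator $\langle\bm{x},\bm{y},\bm{z},\tau,\dots,\tau\rangle_{\mathcal{V},d}$ in its first three arguments and the symmetry of $g^{\lambda}$; the flat unit $\bm{e}=\phi_0$ lies in $H^{\lambda}_K$ hence is $\nabla$-flat, and the string/fundamental-class axiom for twisted invariants gives $g^{\lambda}(\phi_0\ast_{\mathcal{V}}\bm{y},\bm{z})=g^{\lambda}(\bm{y},\bm{z})$. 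The $\ast_{\mathcal{V}}$-invariance required in (iii) is immediate from eq.\eqref{def-ast}: both $g^{\lambda}(\bm{x}\ast_{\mathcal{V}}\bm{y},\bm{z})$ and $g^{\lambda}(\bm{x},\bm{y}\ast_{\mathcal{V}}\bm{z})$ equal the same totally symmetric generating function, so they agree. Associativity together with the connection-compatibility \eqref{comp2} is the WDVV equation for the twisted theory: one writes $F=\sum_d\sum_m\frac{1}{m!}\langle\tau,\dots,\tau\rangle_{\mathcal{V},d}$ as a potential, observes $\partial_\alpha\partial_\beta\partial_\gamma F = g^{\lambda}(\partial_\alpha\ast_{\mathcal{V}}\partial_\beta,\partial_\gamma)$, and invokes the splitting axiom for the twisted virtual class (the twisting class $e_{S^1}(-R^\bullet\mu_* ev^*\mathcal{V})$ behaves multiplicatively under gluing because $R^\bullet\mu_*$ is compatible with the normalization exact sequence), which yields both the third-derivative symmetry encoding \eqref{comp2} and the associativity of $\ast_{\mathcal{V}}$.

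Finally, conditions (ii) and (iv) I would simply cite from the two lemmas above: the graded-ring lemma gives $\nabla_{\bm x}\nabla_{\bm y}\bm{E}=0$ (the components of $\bm{E}$ in eq.\eqref{Euler} are linear in $t$) and the Euler-field identity \eqref{EF1} via eq.\eqref{grading-Euler}, while the lemma on $g^{\lambda}$ gives \eqref{EF2} with charge $D=\dim_{\mathbb{C}}X+r$. Collecting these verifications establishes the proposition. The main obstacle is the proof of associativity and \eqref{comp2}, i.e. the WDVV equation in the $S^1$-equivariant twisted setting: one must make sure the equivariant Euler class of the twisting complex $-R^\bullet\mu_*ev_{m+1}^*\mathcal{V}$ restricts correctly along boundary divisors of $\overline{M}_{0,m}(X,d)$ so that the usual splitting argument of Gromov--Witten theory goes through verbatim. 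Since $\mathcal{V}$ is concave this behaves well — the higher direct image is a vector bundle of the expected rank and its formation commutes with the gluing maps — so the standard argument (as in \cite{Givental}) applies; I would state this precisely and refer to \cite{Givental} rather than reproving WDVV.
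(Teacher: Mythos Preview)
Your proposal is correct and follows the same route as the paper, whose own proof is only two lines: ``By the definition of $\ast_{\mathcal{V}}$, it is clear that $g^{\lambda}$ is $\ast_{\mathcal{V}}$-invariant and satisfies eq.\eqref{comp2}.'' Conditions (ii) and (iv) are taken from the two preceding lemmas exactly as you indicate, and the paper leaves associativity (WDVV) implicit where you spell it out.

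Two small corrections are worth making. First, the Gram matrix of $g^{\lambda}$ in the basis $\{\phi_{\alpha}\}$ is not diagonal; eq.\eqref{eq:glambda} only says each entry is a single monomial in $\lambda$. Unimodularity over $K[\lambda,\lambda^{-1}]$ follows because $e_{S^1}(\mathcal{V})^{-1}$ is a unit in $H^*(X,K(\lambda))$, so $g^{\lambda}$ is the Poincar\'e pairing twisted by an invertible element and hence nondegenerate over $K(\lambda)$; combined with the monomial form of the entries this gives the claim. Second, eq.\eqref{comp2} does \emph{not} require WDVV: since $g^{\lambda}(\bm{x}\ast_{\mathcal{V}}\bm{y},\bm{z})$ is, by eq.\eqref{def-ast}, a third partial derivative of the potential $F$ and $g^{\lambda}$ is constant in flat coordinates, \eqref{comp2} is nothing more than the symmetry of mixed fourth partials of $F$ --- this is exactly what the paper means by ``by the definition of $\ast_{\mathcal{V}}$.'' The splitting axiom and WDVV are needed only for associativity, which you correctly isolate as the one nontrivial ingredient and appropriately refer to \cite{Givental}.
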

\begin{proof}
By the definition of $\ast_{\mathcal{V}}$, it is clear that $g^{\lambda}$ is $\ast_{\mathcal{V}}$-invariant
and satisfies  eq.\eqref{comp2}.
\end{proof}

\subsection{Formal mixed Frobenius structure from local quantum cohomology}
\begin{theorem}\label{thm:main}
The collection $(\circ_{\mathcal{V}}, E, I_{\bullet}, g_{\bullet})$ 
of the following data determines a formal MFS of 
charges $\{\dim_{\mathbb{C}} X+r-k\}_{k\in \mathbb{Z}}$
on $M^{\dagger}$;
\begin{itemize}
\item the multiplication $\circ_{\mathcal{V}}$ on $\mathcal{T}_{M^{\dagger}}$ 
induced from the multiplication $\ast_{\mathcal{V}}$ on $\mathcal{T}_{M^{\dagger}}^{\lambda}$,
\item the Euler vector field $E$ which has the same expression 
as $\bm{E}$ in eq.\eqref{Euler},
\item a nondegenerate filtration $(I_{\bullet}, g_{\bullet})$ on $H_K$ 
constructed by Lemma \ref{induced-nondegenerate-filtration}.
\end{itemize}
\end{theorem}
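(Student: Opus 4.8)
The plan is to assemble Theorem~\ref{thm:main} directly from the results already in hand. By Proposition~\ref{local1}, the triple $(g^{\lambda},\ast_{\mathcal V},\bm E)$ is a localized formal Frobenius structure over $K[\lambda]$ of charge $D=\dim_{\mathbb C}X+r$ on $M^{\dagger}$. The statement to be proved is exactly the conclusion of Proposition~\ref{construction2} applied to this particular localized formal Frobenius structure: the induced multiplication $\circ_{\mathcal V}=\pi(\ast_{\mathcal V})$, the reduced Euler field $E=\pi(\bm E)$ (which, since $\bm E$ in eq.~\eqref{Euler} has no $\lambda$-component, has literally the same coordinate expression after reduction), and the nondegenerate filtration $(I_\bullet,g_\bullet)$ on $H_K$ produced from $g^{\lambda}$ by Lemma~\ref{induced-nondegenerate-filtration}, together form a formal MFS of charges $\{D_k=D-k\}=\{\dim_{\mathbb C}X+r-k\}$. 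So the proof is essentially a one-line invocation: ``Apply Proposition~\ref{construction2} to the localized formal Frobenius structure of Proposition~\ref{local1}.''

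Before writing that line, I would check that the hypotheses of the ambient machinery genuinely hold here, since the only content is verifying applicability. First I would confirm that $g^{\lambda}$ is an honest localized $K[\lambda]$-metric, i.e.\ unimodular over $K[\lambda,\lambda^{-1}]$: this follows from eq.~\eqref{eq:glambda} together with the non-degeneracy of the Poincar\'e pairing $\eta_{\alpha\beta}=\int_X\phi_\alpha\cup\phi_\beta$ on $H_K$, which forces the representation matrix of $g^{\lambda}$ to be $(\eta_{\alpha\beta})$ times a diagonal matrix of powers of $\lambda$, hence a unit of $GL(H_K^{\lambda})$ over $K[\lambda,\lambda^{-1}]$. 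Second, the $\ast_{\mathcal V}$-invariance of $g^{\lambda}$, condition~\eqref{comp2}, and conditions (ii), (iv) of Definition~\ref{def:loc-formal} are precisely what Proposition~\ref{local1} and the two lemmas preceding it record; in particular~\eqref{EF1} comes from $(\mathcal T^{\lambda}_{M^{\dagger}},\ast_{\mathcal V})$ being graded and~\eqref{EF2} from the degree computation with $D=\dim_{\mathbb C}X+r$. Third, I would note that $\ast_{\mathcal V}$ is indeed $\mathcal O_M^{\lambda}$-bilinear with a unit (the class $\phi_0=1$), as guaranteed by the string/fundamental-class axiom already used in passing to the second line of~\eqref{def-ast}, so the formal power series defining the structure constants lie in $\mathcal O_M$ and the multiplication is well defined.

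Given those checks, Proposition~\ref{construction2} delivers all four items of the definition of a formal MFS: conditions (i) and (ii) of Definition~\ref{def:formal-saito} for $(\circ_{\mathcal V},E)$ descend from (i), (ii) of Definition~\ref{def:loc-formal}; the ideals $\mathcal I_k=\mathcal O_M\otimes_K I_k$ and the $\circ_{\mathcal V}$-invariance of $g_\bullet$ come from the $\ast_{\mathcal V}$-invariance of $g^{\lambda}$ exactly as in Theorem~\ref{thm-MFA}; and the preservation of $\mathcal I_k$ by $[E,-]$ together with the charge identity~\eqref{Eg} with $D_k=D-k$ is the residue computation carried out in the proof of Proposition~\ref{construction2}. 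Hence $(\circ_{\mathcal V},E,I_\bullet,g_\bullet)$ is a formal MFS on $M^{\dagger}$ of charges $\{\dim_{\mathbb C}X+r-k\}_{k\in\mathbb Z}$, as claimed.

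I do not expect a serious obstacle: all the analytic and geometric input (well-definedness of the twisted quantum product, the gradedness, the degree count for the pairing) has been isolated in the lemmas of this section, and the algebraic passage to the $\lambda\to 0$ limit has been isolated in \S\ref{section:MFA2} and \S\ref{sec:formal-MFS}. The only point that deserves an explicit sentence rather than a silent appeal is the unimodularity of $g^{\lambda}$, since that is the hypothesis of Lemma~\ref{induced-nondegenerate-filtration} and hence of Proposition~\ref{construction2}; everything else is a direct citation. The proof in the paper can therefore be as short as: ``This is Proposition~\ref{construction2} applied to the localized formal Frobenius structure over $K[\lambda]$ of Proposition~\ref{local1}; the required unimodularity of $g^{\lambda}$ is immediate from eq.~\eqref{eq:glambda} and Poincar\'e duality.''
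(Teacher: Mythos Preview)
Your proposal is correct and matches the paper's own proof exactly: the paper's argument is the single sentence ``Applying Proposition~\ref{construction2} to the localized formal Frobenius structure over $K[\lambda]$ in Proposition~\ref{local1}, we obtain the result.'' Your additional verification of the unimodularity of $g^{\lambda}$ via eq.~\eqref{eq:glambda} and Poincar\'e duality is a welcome elaboration that the paper leaves implicit.
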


\begin{proof}
Applying Proposition \ref{construction2}
to the localized formal Frobenius structure over $K[\lambda]$ in Proposition \ref{local1},
we obtain the result.
\end{proof}

\begin{remark}[on convergence of the formal MFS]
If $\mathcal{V}\to X$ is a negative line bundle,
it can be shown that
the structure constants of $\circ_{\mathcal{V}}$ are convergent
if those of  the quantum product of $X$ are convergent
e.g. if $X$ is a smooth projective toric variety \cite{Iritani}.
The proof is completely the same as Iritani's \cite{Iritani}
except that it is necessary to modify the proof of his Lemma 4.2.
For a pair of such $X$ and a negative line bundle $\mathcal{V}$, 
the formal MFS described in this subsection
is actually a MFS on some open subset of $H_K$.
(See Remark \ref{remark-convergence}).
\end{remark}

Let us describe the MFS in Theorem \ref{thm:main} concretely.
The multiplication $\circ_{\mathcal{V}}$ on $\mathcal{T}_{M^{\dagger}}$ 
is written as follows.
For ${d} \neq 0,~x_1,\ldots,x_m\in H_K$, let
\begin{equation}
\langle x_1,\ldots,x_m \rangle_{{\mathcal{V}},{d}}^{\lambda=0}
=\int_{[\overline{M}_{0,m}(X,{d})]^{\mathrm{vir}}}
\prod_{i=1}^m ev_i^*x_i\cup {e}
(R^{1}\mu_*ev_{m+1}^*\mathcal{V}) 
~,
\end{equation}
where $e$ denotes the
(non-equivariant) Euler class.
Then a potential vector field $\mathcal{G}$ for $\circ_{\mathcal{V}}$ 
(cf. Lemma \ref{lemma:vector-potential}) 
is given by
\begin{equation}\label{eq:G}
\mathcal{G}=\sum_{\alpha=0}^s
\Big(\partial_{\alpha} \Phi_{\rm cl} \Big)\,\phi^{\alpha}
+
\sum_{\alpha=1}^s
\Big(\partial_{\alpha} \Phi_{\rm qu} \Big)\,c_r(\mathcal{V})\cup \phi^{\alpha},
\end{equation}
where $\partial_{\alpha}=\frac{\partial}{\partial t_{\alpha}}$,
$$
\Phi_{\rm cl}=\dfrac{1}{3!}\int_X \tau\cup \tau\cup \tau,\qquad
\Phi_{\rm qu}=\sum_{{d}\neq 0}\sum_{m\geq 0}\frac{q^{{d}}}{m!}
\langle \underbrace{\tau_{\geq 4},\ldots,\tau_{\geq 4}}_{m}
\rangle_{\mathcal{V},{d}}^{\lambda=0},
$$
and $\{\phi^{\alpha}\}$ is a basis of $H_K$ dual to $\{\phi_{\alpha}\}$
with respect to the intersection form of $X$.

By the result of \S \ref{nilpotent-construction}, the nondegenerate filtration 
$(I_{\bullet}, g_{\bullet})$ on $H_K$ is given as follows. 
\begin{equation}\label{filtrationX}
\begin{split}
I_{k}&=0 \quad (k< 0)~,
\\
I_0&=\{x \cup c_r (\mathcal{V})\mid x\in H_K\}~,
\\
I_k&=I_0+J_k,\quad J_k=p_r(\mathrm{Ker}\, N^k),
\end{split}
\end{equation}
where 
$$
N=
\begin{pmatrix}
-c_1({\mathcal V})&1&0&\cdots&0\\
-c_2({\mathcal V})&0&1&\cdots&0\\
\vdots&\vdots&\vdots&\ddots&\vdots\\
-c_{r-1}({\mathcal V})&0&0&\cdots&1\\
-c_r({\mathcal V})&0&0&\cdots&0
\end{pmatrix}:~{H_K}^{\oplus r}\rightarrow {H_K}^{\oplus r}
$$
and $p_r$ is the projection to the $r$th factor.
The metrics $g_k$ on $I_k/I_{k-1}$ are given by
\begin{equation}\label{inner-productsX}
\begin{split}
g_0(c_r(\mathcal{V})\cup x,~c_r(\mathcal{V})\cup y)
&=\int_Xc_r(\mathcal{V})\cup x\cup y\quad (x,y\in H_K)~,
\\
g_k(\overline{x},~\overline{y})&=
\int_X x\cup p_1(N^{k-1}\vec{y}) \quad (k> 0, ~x,y\in J_k)~,
\end{split}
\end{equation}
where $\vec{y} \in \mathrm{Ker}\, N^{k} $ is any lift of $y$.

\begin{remark}[on the nilradical of $\circ_{\mathcal{V}}$]
If $(X,\mathcal{V})$ satisfies the condition that
$\int_C (c_1(X)+c_1({\mathcal{V}}))\leq 0$ holds for any curve $C\subset X$,
then 
$
\phi_{\alpha}\circ_{\mathcal{V}} \phi_{\beta}\in \mathcal{O}_M \otimes_{K}H^{\geq |\phi_{\alpha}|+|\phi_{\beta}|}(X,K)
$ holds by the degree axiom.
Therefore for such $(X,\mathcal{V})$,  the nilradical of $(\mathcal{T}_{M^{\dagger}},\circ_{\mathcal{V}})$ is
$\mathcal{O}_M \otimes_{K}H^{\ge 2}(X,K)$.
\end{remark}

\subsection{Remarks on local mirror symmetry}\label{sec:LMS}
Let $X$ be a Fano toric surface and $\mathcal{V}=K_X$ be the canonical bundle.
Take $\phi_{p+1}=\phi^0$. 
Then 
$$\mathcal{G}=\sum_{\alpha=0}^{p+1} (\partial_{\alpha}\Phi_{\rm cl}) \, \phi^{\alpha}
+\sum_{i=1}^p k_i(\partial_i\Phi_{\rm qu})\, \phi_{p+1}
$$
where
$$\Phi_{\rm qu} =\sum_{d \neq 0} N_{d}\, q^d,\qquad 
N_{d}=\int_{[\bar{M}_{0,0}(X, d)]^{\rm vir}}  e(R^{1}\mu_*ev_{m+1}^*K_X), $$
and $k_i$ are defined by
$\sum_{i=1}^p k_i\phi_i=c_1(K_X)$.  The coefficient of $\phi_{p+1}$
in the above $\mathcal{G}$
is nothing but the function
$\mathcal{F}_{local}$ in \cite[\S 6.3]{CKYZ}. 

Next, let us discuss the relationship with the mirror side of the story.  
Let $\Delta$ be the fan polytope of $X$.
There is a certain family of curves $\mathcal{C}\to \mathcal{M}(\Delta)$ in
$(\mathbb{C}^*)^2$ associated to $\Delta$.
It was shown that 
$$H^*(X, \mathbb{C}) \cong H^2((\mathbb{C}^*)^2, C_z)\qquad (z\in \mathcal{M}(\Delta))$$
as $\mathbb{C}$-vector spaces and that the weight filtration
of the mixed Hodge structure on  $H^2((\mathbb{C}^*)^2, C_z)$ coincides 
with Frobenius filtration (up to shifts). Compare 
\cite[\S 8]{KonishiMinabe10} with 
eq.\eqref{filtrationX} and \cite[eq.(8.8)]{KonishiMinabe12}.

Under the mirror map, $\mathcal{F}_{local}$ corresponds to a double logarithmic period
of $\omega_0(z)=[(\frac{dt_1}{t_1}\wedge \frac{dt_2}{t_2}, 0)]\in H^2((\mathbb{C}^*)^2, C_z)$
and 
$\{ g_0(\phi_i\circ_{K_X}\phi_j , c_1(K_X))\}_{1\leq i,j\leq p}$
is essentially equal to the Yukawa coupling defined in \cite[\S 6]{KonishiMinabe10}.

It is desirable to construct a MFS on  $H^2((\mathbb{C}^*)^2, C_z)$
which is compatible with its variation of mixed Hodge structures
and which agrees with the MFS on $H^*(X, \mathbb{C})$ under the mirror map.


\end{document}